\newtheorem{prop}{Proposition}
\newtheorem{theo}{Theorem}
\newtheorem{assumption}{Assumption}
\newtheorem{lemma}{Lemma}
\title{Reducing training time by efficient localized kernel regression}
\author{Nicole M\"ucke\footnote{Institute for Stochastics and Applications, University of Stuttgart, {\it nicole.muecke@mathematik.uni-stuttgart.de} } }
\date{}
\begin{document}
\maketitle

\begin{abstract}
We study generalization properties of kernel regularized least squares regression based on a partitioning approach. We show that 
optimal rates of convergence are preserved  if the number of local sets grows sufficiently slowly with the sample size. 
Moreover, the partitioning approach can be efficiently combined with local Nystr\"om subsampling, improving computational cost twofold. 
\end{abstract}

\section{Introduction}

The use of reproducing kernel methods for non-parametric regression such as {\it Kernel Regularized Least Squares} (KRLS) or the Support Vector Machine 
has enjoyed a wide popularity and  their theoretical properties are well understood. 
These methods are attractive because they  attain asymptotically 
minimax optimal rates of convergence. But it is also well known that they scale poorly when massive datasets are involved. 
Large training sets give rise to large computational and storage costs. 
For example, computing a kernel ridge regression estimate needs inversion of a $n \times n$- matrix, with $n$ the sample size. This requires ${\cal O}(n^3)$ time and ${\cal O}(n^2)$ 
memory, which becomes prohibitive for large sample sizes. 
\\
\\
{\bf Large Scale Problems: Subsampling and Localization. } 
Because of the above mentioned shortcomings various methods have been developed for saving computation time and memory requirements, speeding up the 
usual approaches. 
During the last years, 
a huge amount of research effort was devoted to finding {\it low-rank approximations} of the kernel matrix.  
A popular instance is Nystr\"om sampling see e.g. \cite{WillSee00}, \cite{Bach2013}, \cite{RudCamRos15}\; where one aims at  
replacing the theoretically optimal approximation obtained by a spectral decomposition (which requires time at least ${\cal O}(n^2)$) 
by a less ambitious suitable low rank approximation  of the kernel matrix via column sampling, reducing
run time to ${\cal O}(np^2)$  where $p$ denotes the rank of the approximation. Clearly the rules of the game are to choose $p$ as small as possible while 
maintaining minimax optimality of convergence rates  
and to explicitly determine this $p$ as a function of the sample size $n$.   

Another line of research with computational benefits is devoted to so called {\it partition-based} or {\it localized} approaches, 
see \cite {SegBla10} for localized SVMs for binary classification, \cite{EbStein15} for localized SVMs using the Gaussian RBF kernel or \cite{Tandon16} 
for more general kernels in an KRLS framework. The main idea behind 
the partitioning approach is to  split the training data based on a disjoint partition of the input space into smaller subsamples and  to 
train only on smaller chunks. 
Prediction for a new input is then much faster since one only has to identify the 
local subset to which the new input belongs and to use the local estimator.  

Another benefit in using localized approaches lies in exploiting regions of high regularity. 
It is well known that rates of convergence highly depend on regularity: The smoother the objective function, the faster the rate of convergence. 
The usual global learning approach however doesn't "see" regions of higher regularity. Global rates of convergence are determined by the region of the input 
space where the target is least smooth. 

Our results show, when building an KRLS estimator based on accurate local ones trained on subregions of the training set, we better take into account the local regularity 
of the objective function, leading to more accurate local approximations. In particular, our approach does not suffer from local  
underfitting, even though the regularization parameter is chosen as in the global approach.

Further, we show that the partitioning approach for KRLS can be efficiently combined with Nystr\"om subsampling, 
substantially reducing training time and speeding up the more usual (localized) version of KRLS.  

Informally, we show if the number of subsets is {\it not too large} and if the number of subsampled datapoints is {\it large enough} 
we obtain  fast upper rates of convergence. 
An important aspect of our approach is the observation that under appropriate conditions on the probability of subsamples
- which come quite naturally in the partitioning approach - our rates of convergence are actually guided by local regions of high regularity, 
leading to improved finite sample bounds.   
 
In this paper, we shall focus only on KRLS, 
although our results could be extended to a much larger class of general spectral regularization methods, including e.g. Gradient Descent, 
similar to \cite{rosascotechrep}\;, \cite {DicFosHsu15}\;, \cite{BlaMuc16} or more recent \cite{Lin18}. 
For a more detailed discussion of our results and a comparison to related research we refer to Section \ref{sec:distributed}. 
\\
\\
The outline of our paper is as follows: 
Section \ref{sec:learning} is devoted to an introduction to the learning problem in an RKHS framework.   
In Section \ref{sec:Partitioning_Approach} we firstly introduce the partitioning approach and introduce all 
assumptions needed to establish our 
main Theorems. In Section \ref{sec:KRR_Nys} we briefly recall the Nyst\"om method and give an upper bound 
in expectation for the rate of convergence. Section \ref{sec:localnysation} is devoted to showing that the partitioning approach and 
subsampling can be effeciently combined. Finally, we compare our results with other approaches in Section \ref{sec:distributed} and  finish with a conclusion 
in Section \ref{sec:conclusion}\;. All our proofs a deferred to the Appendix. 
\\
\\
{\bf Notation: } For $n \in \mbn$, we denote by $[n]$ the set of integers $\{1,...,n\}$. For  two positive sequences $(a_n)_n$ and 
$(b_n)_n$, the expression $a_n \lesssim b_n$ means that $a_n \leq C b_n$, for some universal constant $C<\infty$.  
For $f$ in a Hilbert space $\cH$ we let $f\otimes f$ be the outer product 
acting as rank-one operator $(f \otimes f)h = \inner{h,f}_\cH f$.

\section{Learning with Kernels}
\label{sec:learning}

In this section we introduce the supervised learning problem and give an overview of regularized learning in an RKHS framework. 
\\
\\
{\bf Learning Setting.} 
We consider the well-established setting
of learning under random design where $\cX \times \mbr$ is a probability space with distribution $\rho$. We let $\nu$ be the marginal distribution 
on $\cX$ and $\rho(\cdot|x)$ denotes the conditional distribution on $\mbr$ given $x \in \cX$.  Our goal is minimizing the 
{\it expected risk}
\[ \cE(f) = \int_{\cX \times \mbr} (f(x)- y)^2\; d\rho(x,y) \;. \]
It is known that this quantity is minimized over $L^2(\nu)$ by the regression function 
\[ f_\rho(x) = \int_\cY y \; d\rho(y|x) \;.\] 
However, we exclusively focus our analysis to the special case where $f_\rho$ lies in a hypothesis space $\cH \subset L^2(\nu)$ of 
measurable functions from $\cX$ to $\mbr$.

We are particularly interested in the case where $\cH$ is a separable {\it reproducing kernel Hilbert space} (RKHS), possessing a 
bounded positive definite symmetric measurable kernel $K$ on $\cX$. 
Throughout the paper we assume that
\begin{assumption}
\[ \kappa^2 := \sup_{x,x'} K(x,x') < \infty \;. \]
\end{assumption}
An important feature is the {\it reproducing property:} For any $x \in \cX$ and any $f \in \cH$ one has 
\[  f(x)=\inner{f, K_x}_\cH \;,\]
where $K_x:=K(x,\cdot) \in \cH$, see e.g. \cite{aron50}.

Given a sample $\bz=\{z_j=(x_j, y_j)\}_{j=1}^n$ of size $n \in \mbn$, a classical approach for empirically solving the minimization problem described above is by 
{\it Kernel Regularized Least Squares} (KRLS), also known as {\it Tikhonov Regularization}. This approach is based on minimization  of the penalized empirical functional 
\begin{equation}
\label{eq:min}
  \min_{f \in \cH}\; \frac{1}{n}\sum_{j=1}^n (f(x_j)-y_j)^2 + \lam ||f||^2_\cH  \;,
\end{equation}
where $\lam >0$ is the {\it regularization parameter}. The Representer Theorem, see e.g. \cite{steinbook}, ensures that the solution $\hat f^\lam_\bz$ to \eqref{eq:min} 
exists, is unique and can be written as
\begin{equation}
\label{eq:estim}
  \hat f^\lam_\bz (x) = \sum_{j=1}^n  \alpha_j K(x_j , x) \, 
\end{equation}  
with
\[ \alpha = (\mbk_n + \lam n I)^{-1}\by \]
and where $\mbk_n = (K(x_i, x_j))_{i,j} \in \mbr^{n \times n}$ is the kernel matrix.  In particular, this means that minimization can be restricted to the space 
\[  \cH_n = \{ \;f \in \cH\;|\; f=\sum_j^n \alpha_j K(x_j, \cdot )\;, \alpha_j \in \mbr \; \} \;.\]

{\bf Rates of convergence and Optimality.} 
A common goal of learning theory is to give upper bounds for the convergence of $\hat f_\bz^{\lam_n}$ to $f_\rho$,  
where the regularization parameter is tuned according to sample size,  
and derive rates of convergence  as $n\rightarrow \infty$ under appropriate
assumptions on the regularity of $f_\rho$\,. 
In this paper, our bounds are given in the usual squared $L^2(\nu)$ distance with respect to the sampling distribution,
which is equal to the excess risk when using the squared loss, i.e. 
\begin{equation}
\label{eq:excess_risk}
\|\hat f^\lam-f_\rho\|^2_{L^2} = \cE(f^\lam) - \cE(f_\rho) \;.  
\end{equation}
More precisely, we are interested in bounding the averaged above error over the
draw of the training data (this is also called Mean Integrated Squared Error).

A common framework for expressing regularity of the target function is by means of the kernel covariance operator 
\[ T = \mbe[K_X \otimes K_X] \;. \] 
If there exists $r>0$ such that  
\begin{equation}
\label{eq:source}
  ||  T^{-r }f_\rho ||_\cH \leq R
\end{equation}
for some $R<\infty $, then $f_\rho$ is considered as regular. In particular, this assumption ensures that $f_\rho \in \cH$, see  e.g. \cite{FischStein17}.    
This type of regularity class, also called {\em source condition}, has been considered in a 
learning context by \cite{cusmale}, and \cite{optimalratesRLS} have established upper bounds for
the performance of KRLS over such classes. This has been extended to other
types of kernel regularization methods by \cite{CapYao10,DicFosHsu15,BlaMuc16}.

Furthermore, bounds on the generalization error 
also depend on the notion
of {\em effective dimension} of the data with respect to the regularization parameter $\lam$\,,
defined as
\begin{equation}
\label{def:eff_dim_nips}
 \cN( \lam) := \cN(T, \lam):=\mbox{Trace}[(T+\lam)^{-1}T]   \,. 
\end{equation}
An assumed bound of the form 
\[  \cN( \lam)  \lesssim \lam^{-\gamma} \]
with $0<\gamma \leq 1$ is referred  to as a {\it Capacity Assumption}, see \cite{Zha05}. 
In particular, it is shown in \cite{optimalratesRLS}  that \eqref{def:eff_dim_nips} is ensured if the 
eigenvalues\footnote{Note that boundednes of $K$ ensures that $T$ is trace class, hence compact and has a discrete spectrum.} $(\mu_j)_j$ of $T$ enjoy a polynomial decay, 
i.e. $\mu_j \lesssim j^{-\frac{1}{\nu}}$.  

It is well known and fairly standard that bounds of the excess risk \eqref{eq:excess_risk} are guided by the two conditions \eqref{eq:source} on regularity 
and \eqref{def:eff_dim_nips} on the capacity, i.e.
\begin{equation}
\label{eq:rate}
\mbe\brac{ \cE(f_\bz^{\lam_n}) - \cE(f_\rho)  } \lesssim  R^2 \paren{ \frac{1}{n}  }^{\frac{2r+1}{2r+1+\gamma}} \;,
\end{equation}
with $0<r\leq \frac{1}{2}$, provided the regularization parameter is chosen according to
\begin{equation}
\label{eq:lam_glob}
\lambda_n \simeq \paren{\frac{1}{n}}^{\frac{1}{2r+1+\gamma}} \;.
\end{equation} 
In the framework of KRLS, these bounds were derived in \cite{optimalratesRLS}; \cite{BlaMuc16} derive bounds in a more general framework. Both papers also show optimality 
(i.e. there is also a corresponding lower bound).

From \eqref{eq:rate} we immediately see that the regularity inherent in the problem has an impact on the speed of convergence: 
The larger the regularity, the faster is convergence.  


\section{Localization}
\label{sec:Partitioning_Approach}

In this section we introduce the partitioning approach and derive  our first main results.

\subsection{The Bottom Up Partitioning Approach}

We say that a family $\{\cX_1, ..., \cX_m\}$ of nonempty disjoint subsets of $\cX$   is a  {\it partition} of $\cX$, if $\cX=\bigcup _{j=1}^m\cX_j$.
Given a probability measure $\nu$ on $\cX$,
let $p_j= \nu(\cX_j)$. We endow each $\cX_j$ with a probability measure by restricting the conditional probability
$\nu_j(A):= \nu(A | \cX_j)=p_j^{-1}\nu (A \cap \cX_j)$ to the Borel sigma algebra on $\cX_j$. 


We further assume that $\cH_j$ is a (separable) RKHS, equipped with a measurable positive semi-definite real-valued kernel $K_j$ on each $\cX_j$, 
bounded by $\kappa_j$. Note that any function in $\cH_j$ is only defined on $X_j$. To make them globally defined, we extend each function $f \in \cH_j$ to a function 
$\hat f :\cX \longrightarrow \mbr$ by extending as the zero-function, i.e.  $\hat f(x) = f(x)$ for any $x \in \cX_j$ and $\hat f (x) = 0$ 
else. In particular, $\hat K_j$ denotes the  kernel extended  to $\cX$, explicitly given by $\hat K_j(x, x')=K_j(x, x')$ for any $x, x' \in \cX_j$ and zero else. 
Then the space $\hat \cH_j:=\{ \hat f: f \in \cH_j \}$ equipped with the norm $||\hat f||_{\hat \cH_j} = ||f||_{\cH_j}$ 
is again an RKHS of functions on $\cX$ with kernel $\hat K_j$. 
Finally, the direct sum
\[  \cH := \bigoplus_{j=1}^m \hat \cH_j = \{ \; \hat f=\sum_{j=1}^m \hat f_j \;:\; \hat f_j \in \hat \cH_j \;\}  \]
with  norm 
\[ ||\hat f||^2_{\cH}=\sum_{j=1}^m p_j \; || \hat f_j||^2_{\hat \cH_j} \] 
is also an RKHS for which
\begin{equation}
\label{def:kernel}
 K(x, x') = \sum_{j=1}^m p_j^{-1} \hat K_j(x, x')  \;, 
\end{equation} 
$ x, x' \in \cX$, is the reproducing kernel, see \cite{aron50}.


Given training data $\cD=\{x_i, y_i\}_{i \in [n]}$, 
we let 
\[ I_j=\{i \in [n]: x_i \in \cX_j\} \]
the set of indices indicating the samples associated to  $\cX_j$, with $|I_j|=n_j$. We split $\cD$ according to 
the above partition, i.e. we let $\cD_j=\{ x_i, y_i \}_{i \in I_j}$. We further let 
$\bx_j=(x_i)_{i \in I_j}$, $\by_j = (y_i)_{i \in I_j}$.  


Fixing a regularization parameter $\lam >0$, we compute for each $\cD_j$ a local KRLS estimator (compare with \eqref{eq:estim} in the global setting)
\begin{equation*}
\label{def:local_estimator}
\hat f_{\cD_j}^{\lam}:= \sum_{i \in I_j}  \alpha_j^{(i)} \hat K_j(x_i, \cdot) \in \hat \cH_j \; ,
\end{equation*}
where $\alpha_j \in \mbr^{n_j}$ is given by 
\[ \alpha_j = \paren{\mbk_j + n_j\lambda }^{-1}\by_j \]
and with $\mbk_j$ the kernel matrix associated to $\cD_j$. 

Finally, the overall estimator is defined by
\begin{equation}
\label{def:global_estimator}
\hat f^{\lam}_{\cD}:=\sum_{j=1}^m \hat f_{\cD_j}^{\lam} \;, 
\end{equation}
which  by construction belongs  $\cH$   and  decomposes according to the direct sum  $\hat \cH_1 \oplus ...\oplus \hat \cH_m$.



\subsection{Finite Sample Bounds}

Our aim is to give an upper bound for the expected excess risk 
\[ \mbe\brac{\; \cE(\hat f^{\lam}_{\cD}) - \cE(f_\rho) \;} \;. \] 
In view of the regularity assumptions made in the global setting and described 
in the previous section, it is now straightforward how to express local regularity: 


\begin{assumption}[Regularity]
\label{ass:zero}
\begin{enumerate}
\item  The regression function $f_\rho$ belongs to $\cH$ and thus has a unique representation $f_\rho = f_1+...+f_m$\;, with 
$f_j \in \hat \cH_j$\;.
\item
The local regularity of the regression function is measured in terms of a source condition:
\begin{equation} 
\label{ass:SC}
 ||T_j^{-r_j}f_j||_{\hat \cH_j} \leq R\;, \quad \quad 0<r_j\leq  \frac{1}{2}\;,
\end{equation}
with $R < \infty$. 
\end{enumerate}
\end{assumption}
Note that this Assumption implies a global regularity of $f_\rho$ as
\[ ||T^{-r}f||_{\cH} \leq R \;, \quad 0<r\leq  \frac{1}{2}\;,  \]
with $r = \min(r_1, ..., r_m)$ and $R < \infty$.

Furthermore, we need some compatibility between the local effective dimensions and the global effective dimension in 
terms of the local probabilities $p_j$\;.

\begin{assumption}[Capacity]
\label{ass:one}
\begin{enumerate}
\item The local effective dimensions obey 
\begin{equation} 
\label{Nnew}
m \sum_{j=1}^m p_j\;\cN( T_j, \lam) =\cO(\cN( T, m\lam))  \;.
\end{equation}
\item Global capacity: For some $0<\gamma \leq 1$ 
\[ \cN( T, \lam) \lesssim \lam^{-\gamma} \;. \]
\end{enumerate}
\end{assumption}

Eq. \eqref{Nnew} is in particular an exact equality if $p_j \equiv \frac{1}{m}$.


As in the global learning problem, the choice of the regularization parameter $\lam=\lam_n$ depending on the sample size $n$ is crucial for 
the algorithm to work well. Interestingly, our main result shows that choosing $\lam$ locally on each subset exactly in the same way as for the global learning KRLS problem 
(see \eqref{eq:lam_glob}) leads to the same error bounds as in \eqref{eq:rate}.

\begin{theo}[Finite Sample Bound]
\label{theo:fixed_partition}
Let $n_j =\lfloor \frac{n}{m}\rfloor$. 
Then, 
with the choice 
\begin{equation}
\label{def:lamchoice}
\lam_n \simeq   \paren{\frac{1}{n}}^{\frac{1}{2r+1+\gamma}}
\end{equation} 
and with 
\begin{equation}
\label{equation:n0}
  m \lesssim n^{\alpha} \;,\quad  \alpha \leq\frac{2r}{2r+1+\gamma}  \;. 
\end{equation}
we have the following error bound 
\begin{equation}
\label{eq:fixed_upper_part}
   \mbe\brac{\; \cE(\hat f^{\lam}_{\cD}) - \cE(f_\rho) \;}  \lesssim    R^2\;\left(\frac{1}{n} \right)^{\frac{2r+1}{2r+1+\gamma}}  \,.
\end{equation}
\end{theo}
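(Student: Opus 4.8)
The idea is to use disjointness of the cells to decouple the excess risk of $\hat f^{\lam}_{\cD}$ into a $p_j$-weighted sum of $m$ separate global-type KRLS problems, to bound each one by the standard non-asymptotic bias--variance estimate behind \eqref{eq:rate}, and to recombine via Assumptions \ref{ass:zero}--\ref{ass:one}; the only genuinely new point is that the \emph{global} choice of $\lam_n$ must remain admissible for subproblems that see only $n_j\simeq n/m$ data points, and this is exactly what forces \eqref{equation:n0}. Concretely, write $\cE(g)-\cE(f_\rho)=\norm{g-f_\rho}_{L^2(\nu)}^2$ and $\hat f^{\lam}_{\cD}-f_\rho=\sum_{j=1}^m(\hat f^{\lam}_{\cD_j}-f_j)$, where the $j$-th summand is supported on $\cX_j$; disjointness of the partition annihilates all cross terms, so
\[
  \mbe\brac{\cE(\hat f^{\lam}_{\cD})-\cE(f_\rho)}
  =\sum_{j=1}^m\mbe\norm{\hat f^{\lam}_{\cD_j}-f_j}_{L^2(\nu)}^2
  =\sum_{j=1}^m p_j\,\mbe\norm{\hat f^{\lam}_{\cD_j}-f_j}_{L^2(\nu_j)}^2 ,
\]
using $\int_{\cX_j}(\,\cdot\,)^2\,d\nu=p_j\int_{\cX_j}(\,\cdot\,)^2\,d\nu_j$. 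Each right-hand term is the mean integrated squared error of an ordinary KRLS estimator with parameter $\lam$, built from the $n_j=\floor{n/m}$ samples in $\cX_j$ (a deterministic count under the stated hypothesis; in general, conditionally on which cell each point falls into, $\cD_j$ is i.i.d.\ of size $n_j\sim\mathrm{Binomial}(n,p_j)$ and one controls the fluctuation of $n_j$ around $np_j$), for the local problem with marginal $\nu_j$, conditional $\rho(\cdot|x)$, regression function $f_j\in\hat\cH_j$ and covariance operator $T_j$.

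Next I would apply to each local problem the general-$\lam$ bias--variance bound underlying \eqref{eq:rate}, as proved in \cite{optimalratesRLS,BlaMuc16}: under \eqref{ass:SC} with exponent $r_j\le\frac12$,
\[
  \mbe\norm{\hat f^{\lam}_{\cD_j}-f_j}_{L^2(\nu_j)}^2
  \;\lesssim\; R^2\,\lam^{\,2r_j+1}\;+\;\frac{\cN(T_j,\lam)}{n_j},
\]
valid provided $n_j\lam\gtrsim\cN(T_j,\lam)$ (the usual condition that the local effective sample size dominates the local effective dimension, which makes the lower-order remainders negligible). Summing the bias terms with weights $p_j$, using $r_j\ge r$ and $\lam_n<1$ for $n$ large, gives $\sum_j p_j R^2\lam_n^{2r_j+1}\le R^2\lam_n^{2r+1}$; summing the variance terms, $n_j=\floor{n/m}\gtrsim n/m$ and Assumption \ref{ass:one} give
\[
  \sum_{j=1}^m p_j\,\frac{\cN(T_j,\lam)}{n_j}
  \;\lesssim\;\frac1n\paren{\,m\sum_{j=1}^m p_j\,\cN(T_j,\lam)}
  \;\lesssim\;\frac{\cN(T,m\lam)}{n}
  \;\lesssim\;\frac{(m\lam)^{-\gamma}}{n}\;\le\;\frac{\lam^{-\gamma}}{n}.
\]
With $\lam=\lam_n\simeq n^{-1/(2r+1+\gamma)}$ the bias contribution equals $R^2 n^{-(2r+1)/(2r+1+\gamma)}$ and the variance contribution is $\le n^{-(2r+1)/(2r+1+\gamma)}$, i.e.\ \eqref{eq:fixed_upper_part}.

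The main obstacle, and the source of \eqref{equation:n0}, is the admissibility condition $n_j\lam_n\gtrsim\cN(T_j,\lam_n)$: because $\lam_n$ is calibrated to the \emph{full} sample size while each local fit uses only $n_j\simeq n/m$ points, one must rule out local over-regularization. Bounding $\cN(T_j,\lam_n)\lesssim\lam_n^{-\gamma}$, this reads $(n/m)\,\lam_n^{1+\gamma}\gtrsim1$, i.e.\ $m\lesssim n\,\lam_n^{1+\gamma}=n^{2r/(2r+1+\gamma)}$, which is exactly \eqref{equation:n0}; so the ceiling on $m$ is precisely the threshold that keeps every local KRLS subproblem in the regime where the balanced rate holds. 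The remaining steps --- verifying that $T_j$, the targets $f_j$ and Assumption \ref{ass:one} are the right objects for the weighted sum of local effective dimensions to collapse to $\cN(T,m\lam)$ once the factor $1/n_j\lesssim m/n$ is pulled out, and (in the general case) the handling of random $n_j$ --- are routine bookkeeping on top of the standard KRLS estimate.
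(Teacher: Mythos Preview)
Your proposal is correct and follows essentially the same route as the paper: orthogonal decomposition of the excess risk into $\sum_j p_j\,\mbe\norm{\hat f^{\lam}_{\cD_j}-f_j}_{L^2(\nu_j)}^2$, a bias--variance bound on each local KRLS piece, and recombination via Assumption~\ref{ass:one}, with the constraint on $m$ arising from the requirement that $(n/m)\lam_n$ dominate the local effective dimension. The only presentational difference is that the paper does not invoke the bound $R^2\lam^{2r_j+1}+\cN(T_j,\lam)/n_j$ as a black box but rederives it in two separate Propositions via concentration of $T_{\bx_j}$ around $T_j$ (packaged in the quantity $\cB_{n_j}(T_j,\lam)$ of Proposition~\ref{prop:Guo_nips}) followed by integration; the admissibility condition you identify then appears as the requirement $\cB_{n/m}(T_j,\lam_n)\le 2$, and the bound $\cN(T_j,\lam)\lesssim\lam^{-\gamma}$ you use is justified there through Lemma~\ref{lem:eff_dim_sum}.
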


Condition \eqref{equation:n0} tells us that the sample size needs to be large enough on each local set in order to guarantee meaningful bounds. We can see that 
{\it large enough} depends here on the regularity $r$ and the capacity $\gamma$. 

We emphasize that the rhs of \eqref{eq:fixed_upper_part} coincides (for the case $m=1$) with the minimax optimal rate of convergence, as shown in 
\cite{optimalratesRLS} and \cite{BlaMuc16}\;. 
Note that for $m>1$ there is no explicit proof of lower bounds available in the literature (because of our additional 
hypothesis \eqref{Nnew}\;, restricting the considered model class).




\subsection{Incorporating Locality: Improved Error Bounds}

Our result in Theorem \ref{theo:fixed_partition} shows that the error bound is indeed guided by the lowest degree of regularity. Next we show, that 
sometimes we can do even better if low regularity only occurs on a local set having small probability. To be more precise, 
assume that there is an exceptional set $E$
of indices such that the smoothness of $f_\rho$ is low on each set $\cX_j$, $j \in E$ and higher on each $\cX_j$, $j \in E^c$. 
For ease of reading we shall only analyze the most simple case given by:
\begin{assumption}[Regularity]
\label{ass:three}
There are $r_{l}, r_h \in (0,\frac{1}{2}]$, with $r_l < r_h$ (corresponding to {\it low} smoothness and {\it high} smoothness) and there are 
$R_l<\infty$, $R_h< \infty$ such that 
\[   ||T_j^{-r_l}f_j||_{\hat \cH_j} \leq R_l \;, \quad \forall j \in E \;, \]
\[   ||T_j^{-r_h}f_j||_{\hat \cH_j} \leq R_h \;, \quad \forall j \in E^c \;.\]
Furthermore, assume that for any $n$ sufficiently large 
$$\paren{\sum_{j \in E} p_j} \lesssim \paren{\frac{R_h}{R_l}}^2 \lam_n^{2(r_h - r_l)} \;.$$
Here, $\lam_n$ is given by \eqref{def:lamchoice2}.
\end{assumption}

Thus, global smoothness is given by the small degree $r_l$, while local smoothness on the complement of the exceptional set is higher.
We emphasize that this is an additional assumption on the sampling distribution $\nu$. 
Assumption \ref{ass:three} then ensures that the probability of the exceptional set is so small that the error bound will actually be governed 
by the higher smoothness $r_h$, leading to an improved finite sample bound. 
More precisely,

\begin{theo}[Improved error Bound]
\label{theo:fixed_partition_improved}
Let $n_j =\lfloor \frac{n}{m}\rfloor$. 
Then, 
with the choice 
\begin{equation}
\label{def:lamchoice2}
\lam_n \simeq  \paren{\frac{1}{n}}^{\frac{1}{2r_h+1+\gamma}}
\end{equation} 
and with 
\begin{equation}
\label{equation:n02}
  m \lesssim n^{\alpha} \;,\quad  \alpha\leq\frac{2r_h}{2r_h+1+\gamma}  \;. 
\end{equation}
we have the following improved error bound 
\begin{equation}
\label{eq:fixed_upper_part_improved}
  \mbe\brac{\; \cE(\hat f^{\lam}_{\cD}) - \cE(f_\rho) \;}  \lesssim  \;  R_h^2 \left(\frac{1}{n} \right)^{\frac{2r_h+1}{2r_h+1+\gamma}}  \,.
\end{equation}
\end{theo}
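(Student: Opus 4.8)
The plan is to reduce the improved bound to the basic finite-sample estimate of Theorem \ref{theo:fixed_partition} by tracking carefully how the error decomposes over the partition, and then exploiting the smallness of the exceptional set $E$ expressed in Assumption \ref{ass:three}. First I would recall that, by construction \eqref{def:global_estimator} and the direct sum structure $\cH = \hat\cH_1 \oplus \dots \oplus \hat\cH_m$ with $\|\hat f\|_\cH^2 = \sum_j p_j \|\hat f_j\|_{\hat\cH_j}^2$, both the excess risk and the error $\|\hat f_\cD^\lam - f_\rho\|_{L^2}^2$ split as a $p_j$-weighted sum of local contributions: schematically
\[
  \mbe\brac{\cE(\hat f_\cD^\lam) - \cE(f_\rho)} \;=\; \sum_{j=1}^m p_j\, \mbe\brac{\;\|\hat f_{\cD_j}^\lam - f_j\|_{L^2(\nu_j)}^2\;}\;.
\]
(This identity, or the bound underlying it, is exactly what is established in the proof of Theorem \ref{theo:fixed_partition}; I would invoke it rather than re-derive it.) The point is that each local term is the excess risk of an ordinary KRLS estimator on $n_j = \lfloor n/m\rfloor$ samples drawn from $\nu_j$, with covariance operator $T_j$, source exponent $r_j \in \{r_l, r_h\}$, and radius $R_l$ or $R_h$ according to whether $j \in E$ or $j \in E^c$.

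Next I would apply the standard single-machine KRLS bound \eqref{eq:rate} to each local problem \emph{with its own exponent}. For the choice $\lam_n \simeq (1/n)^{1/(2r_h+1+\gamma)}$ of \eqref{def:lamchoice2} and $m \lesssim n^{\alpha}$ with $\alpha \le 2r_h/(2r_h+1+\gamma)$, the effective local sample size $n_j \simeq n/m$ is still large enough (this is precisely condition \eqref{equation:n02}, the analogue of \eqref{equation:n0} with $r$ replaced by $r_h$) for the bias–variance analysis to go through on every piece. Summing with the capacity compatibility Assumption \ref{ass:one} — which controls $m\sum_j p_j \cN(T_j,\lam)$ by $\cN(T, m\lam) \lesssim (m\lam)^{-\gamma}$ — I would obtain a bound of the form
\[
  \mbe\brac{\cE(\hat f_\cD^\lam) - \cE(f_\rho)} \;\lesssim\; R_h^2\,(m\lam_n)^{-\gamma}\,\lam_n\,\frac{m}{n} \;+\; R_h^2 \lam_n^{2r_h} \;+\; \Big(\sum_{j\in E} p_j\Big)\,R_l^2 \lam_n^{2 r_l}\;,
\]
where the first two terms are the aggregated variance and the high-smoothness bias from the sets $j\in E^c$ (plus, for $j\in E$, a variance term that is dominated since $r_l<r_h$ makes that bias the bottleneck there), and the last term is the bias coming from the low-smoothness sets $j\in E$. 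The first two terms are handled exactly as in Theorem \ref{theo:fixed_partition} and, under \eqref{equation:n02} and \eqref{def:lamchoice2}, are both $\lesssim R_h^2 (1/n)^{(2r_h+1)/(2r_h+1+\gamma)}$.

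The crux is the last term. Here I would plug in the hypothesis $\sum_{j\in E} p_j \lesssim (R_h/R_l)^2 \lam_n^{2(r_h-r_l)}$ from Assumption \ref{ass:three}, which is tailored precisely so that
\[
  \Big(\sum_{j\in E} p_j\Big)\,R_l^2 \lam_n^{2 r_l} \;\lesssim\; R_h^2\, \lam_n^{2(r_h - r_l)}\,\lam_n^{2 r_l} \;=\; R_h^2\, \lam_n^{2 r_h}\;,
\]
which is again of the target order $R_h^2 (1/n)^{(2r_h+1)/(2r_h+1+\gamma)}$ by \eqref{def:lamchoice2}. Collecting the three contributions yields \eqref{eq:fixed_upper_part_improved}. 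I expect the main obstacle to be bookkeeping rather than a new idea: one must verify that the local variance terms for $j\in E$ (which carry radius $R_l$ but the \emph{same} $\lam_n$, now sub-optimal for exponent $r_l$) do not blow up — they are controlled because variance scales like $\cN(T_j,\lam_n)\lam_n/n_j$ independently of the smoothness, so Assumption \ref{ass:one} still absorbs them — and that the constant in \eqref{eq:rate} can be taken uniform across the $m$ local problems (it depends only on $\kappa$, $\gamma$, and an upper bound on $r_j \le 1/2$, all uniform here). A secondary subtlety is making the heuristic error-splitting identity rigorous in expectation, including the contribution of local sets that receive atypically few sample points $n_j$; this is the same concentration-of-$n_j$ argument already needed for Theorem \ref{theo:fixed_partition}, so I would reuse it verbatim.
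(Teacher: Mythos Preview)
Your proposal is correct and follows essentially the same route as the paper: split the error into the $p_j$-weighted local contributions, bound the approximation error separately over $j\in E$ and $j\in E^c$ via Proposition~\ref{prop:approx_error_part}, invoke Assumption~\ref{ass:three} so that $\big(\sum_{j\in E} p_j\big) R_l^2\lam_n^{2(r_l+\frac12)} \lesssim R_h^2\lam_n^{2(r_h+\frac12)}$, and carry the sample-error bound over verbatim from the proof of Theorem~\ref{theo:fixed_partition}. Two cosmetic points: the bias exponents should read $\lam_n^{2r_\bullet+1}$ rather than $\lam_n^{2r_\bullet}$ (the extra $\sqrt{\lam}$ comes from measuring in $L^2(\nu_j)$), and the paper takes $n_j=\lfloor n/m\rfloor$ as a deterministic hypothesis, so no concentration-of-$n_j$ argument is needed.
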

Again, for giving meaningful bounds 
the sample size needs to be large enough on each local set, depending on the regularity $r$ and capacity $\gamma$.




\section{KRLS Nystr\"om Subsampling}
\label{sec:KRR_Nys}

In this section we recall the popular KRLS Nystr\"om subsampling method.   
For simplicity, we restrict ourselves to so called {\em Plain Nystr\"om}, which works as follows:  Given a training set $x_1, ..., x_n$ of 
random inputs, we sample uniformly at random without replacement $l \leq n$ points $\tilde x_1, ..., \tilde x_l$. Now the crucial idea is to seek 
for an estimator for the unknown $f_\rho$ in a reduced space
\[  {\cal H}_l = \{ \;  f\;:\; f=\sum_{j=1}^l \alpha_j K(\tilde x_j,\cdot)\; , \;  \alpha \in \mbr^l \;  \} \,. \]
In \cite{RudCamRos15} it is shown that the solution of the minimization problem
\[ \min_{f \in {\cal H}_l } \; \frac{1}{n}\sum_{j=1}^n(f(x_j)-y_j)^2 + \lam ||f||^2_{{\cal H}_l} \]
is given by
\begin{equation}
\label{estimator:nystrom}
 \hat f_{n,l}^{\lam} = \sum_{j=1}^l \alpha_j K(\tilde x_j, \cdot) \;,
\end{equation}
with
\[  \alpha = \left(\mbk^*_{nl}\mbk_{nl} + n\lambda  \mbk_{ll}\right )^{\dagger}\;\mbk^*_{nl} \by \;, \]
where $(\mbk_{nl})_{ij}=K(x_i,\tilde x_j)$, $(\mbk_{ll})_{kj}=K(\tilde x_k,\tilde x_j)$, $i=1,...,n$\;,\; $k,j=1,...,l$ and 
$A^{\dagger}$ denotes the generalized inverse of a matrix $A$.

Clearly, one aims at minimizing the number $l$ of subsamples needed for preserving  minimax optimality. 
We amplify the results  in \cite{RudCamRos15} by explicitly 
computing how $l$ needs to grow when the total number of samples $n$ tends to infinity. 
We exhibit the explicit dependence on the regularity parameter $r$  and on the capacity assumption, parametrized by $\gamma$.
Furthermore, we refine the analysis in \cite{RudCamRos15} by deriving bounds in expectation  removing the dependence of $l$ on the confidence level. 
This will be crucial for deriving our optimality results in the next section. 

We consider the setting of Section \ref{sec:Partitioning_Approach} with $m=1$. Granted Assumptions \ref{ass:zero} and \ref{ass:one}, one has: 

\begin{theo}[KRLS-Plain Nystr\"om]
\label{theo:plain_nys}
If the number $l$ of subsampled points satisfies 
\begin{equation}
l\gtrsim n^{\beta} \;, \quad \beta \geq \frac{1+\gamma}{2r+1+\gamma}   \;,
\end{equation}
and if $r \in [0, \frac{1}{2}]$ then  the choice \eqref{def:lamchoice} for $(\lam_n)_n$ 
leads to the error bound 
\[
        \mbe\brac{  \cE(\hat f_{n, l_n}^{\lam_n}) -\cE(f_\rho ) }  \lesssim   R^2\paren{\frac{1}{n}}^{\frac{2r+1}{2r+1+\gamma}} \;. 
\]
\end{theo}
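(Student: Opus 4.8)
The plan is to treat the case $m=1$ with the operator-theoretic machinery for Nyström KRLS from \cite{RudCamRos15}, but to organise the estimates so that the final bound holds in expectation and the requirement on $l$ is explicit and free of any confidence level. Introduce the sampling operator $S_\bx:\cH\to\mbr^n$ with $(S_\bx f)_i = n^{-1/2}f(x_i)$, so that $S_\bx^* S_\bx = T_\bx := \frac1n\sum_{i=1}^n K_{x_i}\otimes K_{x_i}$ and $g_\bz := S_\bx^*(\by/\sqrt n) = \frac1n\sum_i y_i K_{x_i}$, and keep $T$ as in \eqref{def:eff_dim_nips}. Let $P=P_l$ be the orthogonal projection in $\cH$ onto $\cH_l = \mathrm{span}\{K_{\tilde x_1},\dots,K_{\tilde x_l}\}$; then the estimator \eqref{estimator:nystrom} has the operator form $\hat f^{\lam}_{n,l} = (P T_\bx P + \lam)^{-1} P g_\bz$, and since $f_\rho\in\cH$ the excess risk equals $\norm{T^{1/2}(\hat f^\lam_{n,l}-f_\rho)}^2_\cH$. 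I would bound $\norm{T^{1/2}(\hat f^\lam_{n,l}-f_\rho)}_\cH$ by the triangle inequality into three pieces: the deterministic approximation error $\norm{T^{1/2}(f_\lam-f_\rho)}_\cH$ with $f_\lam=(T+\lam)^{-1}Tf_\rho$; a sampling/noise piece built from $\norm[0]{(T+\lam)^{-1/2}(g_\bz-T_\bx f_\rho)}_\cH$ together with the multiplicative perturbation $\norm[0]{(T+\lam)^{-1/2}(T-T_\bx)(T+\lam)^{-1/2}}$; and a Nyström residual controlled by $\norm[0]{(I-P_l)(T+\lam)^{1/2}}$, which measures how much of the top of the spectrum of $T$ escapes the sampled subspace.

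The first two pieces are handled exactly as in the global analysis recalled in Section \ref{sec:learning}. Under the source condition \eqref{eq:source} one obtains $\norm{T^{1/2}(f_\lam-f_\rho)}^2_\cH \leq R^2\lam^{2r+1}$ from the scalar inequality $t^{r+1/2}/(t+\lam)\leq\lam^{r-1/2}$, valid for $0\leq r\leq\frac12$; a Bernstein inequality for the $\cH$-valued average $g_\bz-T_\bx f_\rho$ gives $\mbe\brac{\norm[0]{(T+\lam)^{-1/2}(g_\bz-T_\bx f_\rho)}^2_\cH} \lesssim \cN(\lam)/n$; and $\norm[0]{(T+\lam)^{-1/2}(T-T_\bx)(T+\lam)^{-1/2}}\leq\frac12$ holds off an event of probability $\delta$ as soon as $n\gtrsim\kappa^2\lam^{-1}\log(1/\delta)$, which for $\lam=\lam_n$ as in \eqref{def:lamchoice} is automatic once $n$ is large. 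Inserting $\lam_n$ and the capacity bound $\cN(\lam)\lesssim\lam^{-\gamma}$ into $R^2\lam_n^{2r+1}+\cN(\lam_n)/n$ already produces the target order $R^2(1/n)^{(2r+1)/(2r+1+\gamma)}$.

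For the Nyström residual I would invoke the plain-Nyström lemma of \cite{RudCamRos15}: conditionally on the training inputs (from which the $\tilde x_j$ are drawn uniformly without replacement), if $l\gtrsim\cN_\infty(\lam)\log(\kappa^2/(\lam\delta))$ with $\cN_\infty(\lam):=\sup_x\norm[0]{(T+\lam)^{-1/2}K_x}^2_\cH\leq\kappa^2/\lam$, then $\norm[0]{(I-P_l)(T+\lam)^{1/2}}^2\leq 3\lam$ with probability at least $1-\delta$; on this event the Nyström term is, after the standard algebra relating $(PT_\bx P+\lam)^{-1}$ to $(T_\bx+\lam)^{-1}$, of the same order as the other two and does not degrade the rate. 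Since $l\gtrsim n^{\beta}$ with $\beta\geq(1+\gamma)/(2r+1+\gamma)$ forces $l\gtrsim\lam_n^{-(1+\gamma)}=\lam_n^{-1}\cdot\lam_n^{-\gamma}$, and $\lam_n^{-\gamma}=n^{\gamma/(2r+1+\gamma)}$ grows polynomially, this comfortably dominates $\kappa^2\lam_n^{-1}\log(1/\lam_n)$, so the hypothesis of the lemma is met with room to spare even after the confidence level is taken polynomially small.

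The step I expect to be the real work — and the point where this statement improves on \cite{RudCamRos15} — is upgrading the high-probability estimate to a bound in expectation while keeping the condition on $l$ clean. I would intersect the three good events into one event $\cE_{n,l}$ of probability at least $1-\delta_n$, choose $\delta_n$ a suitable power of $\lam_n$ (e.g. $\delta_n\simeq\lam_n^{2r+2}$), and split $\mbe\brac{\norm{T^{1/2}(\hat f^\lam_{n,l}-f_\rho)}^2_\cH}$ over $\cE_{n,l}$ and its complement. On $\cE_{n,l}$ the three bounds above give the rate. On $\cE_{n,l}^c$ I would use the crude a priori bound $\lam\norm{\hat f^\lam_{n,l}}^2_\cH\leq\frac1n\sum_i y_i^2$ (competitor $f=0$ in the penalised problem), so that $\norm{T^{1/2}(\hat f^\lam_{n,l}-f_\rho)}^2_\cH\lesssim\kappa^2\lam^{-1}\frac1n\sum_i y_i^2+\norm{f_\rho}^2_{L^2}$; since $\cE_{n,l}^c$ depends only on the inputs and the subsampling, taking expectations and using the conditional second moment of $y$ bounds the off-event contribution by $\lesssim(\kappa^2\sigma_0^2\lam_n^{-1}+\norm{f_\rho}^2_{L^2})\delta_n$, which is of the rate's order precisely because $\delta_n\lesssim\lam_n\cdot\lam_n^{2r+1}$. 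One then checks once more that $l\gtrsim n^{\beta}$ still implies $l\gtrsim\cN_\infty(\lam_n)\log(1/(\lam_n\delta_n))\simeq\kappa^2\lam_n^{-1}\log n$, which it does, and collects the terms to obtain the claimed bound. The only genuinely delicate bookkeeping is tracking the multiplicative constants and logarithmic factors through the composition of operator inequalities so that the threshold on $l$ comes out as stated; everything else is by now routine RKHS calculus.
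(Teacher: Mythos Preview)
Your outline is essentially sound and would prove the theorem, but it takes a genuinely different route from the paper on the two points that matter most.

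\emph{Error decomposition.} The paper does not split off a population bias term $f_\lam=(T+\lam)^{-1}Tf_\rho$. Instead it writes $\|\sqrt{T}(\hat f^{\lam}_{n,l}-f_\rho)\|_{\cH}\leq T_1+T_2$ with $T_1=\|\sqrt{T}\,g_{\lam,l}(T_\bx)(S_\bx^*\by-T_\bx f_\rho)\|_\cH$ and $T_2=\|\sqrt{T}\,g_{\lam,l}(T_\bx)(T_\bx f_\rho-f_\rho)\|_\cH$, then decomposes $T_2$ further into three operator terms involving the computational error $\cC_u(l,\lam)=\|(I-VV^*)(T+\lam)^u\|$. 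The Nystr\"om residual is then controlled not via the $\cN_\infty$-lemma of \cite{RudCamRos15} that you invoke, but via $\cC_{1/2}(l,\lam_n)\leq\sqrt{\lam_n}\,\|(T_{\bx_l}+\lam_n)^{-1}(T+\lam_n)\|^{1/2}$ combined with the Guo--Shi--Zhou bound $\|(T_{\bx_l}+\lam)^{-1}(T+\lam)\|\leq 8\log^2(2/\eta)\,\cB_l(T,\lam)$, where $\cB_l$ involves only $\cN(T,\lam)$. This gives directly a tail bound of the form $C\log^k(1/\eta)$ with no $\log n$ in the sample-size condition on $l$. Your route instead absorbs the $\log n$ coming from $\cN_\infty(\lam_n)\log(1/(\lam_n\delta_n))\sim\lam_n^{-1}\log n$ into the polynomial slack $\lam_n^{-\gamma}$; this is correct because $\gamma>0$, but it is a different mechanism and would not work at $\gamma=0$.

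\emph{From high probability to expectation.} This is the sharper divergence. You propose the truncation scheme: fix $\delta_n\simeq\lam_n^{2r+2}$, bound the error on $\cE_{n,l}$, and use the crude a priori estimate $\lam\|\hat f^\lam_{n,l}\|_\cH^2\leq n^{-1}\sum_i y_i^2$ on $\cE_{n,l}^c$. The paper avoids all of this. Because every probabilistic ingredient is arranged to give a bound of the form $\mbp[X>C\log^u(k/\eta)]<\eta$ \emph{uniformly in} $\eta\in(0,1]$, one simply applies the elementary identity $\mbe[X]=\int_0^\infty\mbp[X>t]\,dt$ (the paper's Lemma on integration) to obtain $\mbe[X]\leq Cu\Gamma(u)/k$ directly. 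No bad-event analysis, no a priori bound, and no dependence of the final constants on a chosen $\delta_n$. Your approach is more elementary and more robust to situations where the tail is not polylogarithmic in $1/\eta$; the paper's approach is shorter and explains why the condition on $l$ is completely free of the confidence level. Both are valid here.
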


Our main result shows that the number of subsampled points can be substantially reduced from $\cO\paren{n^{\beta}\log(n)}$, see  \cite{RudCamRos15}, 
to actually $\cO\paren{n^{\beta}}$.


\section{Combining Localization and Subsampling}
\label{sec:localnysation}

In this section we establish, that upper rates of convergence are preserved if 
one combines the partitioning approach of Section \ref{sec:Partitioning_Approach} with the Nystr\"om subsampling approach of the previous section.
For simplicity we assume that the local sample size is roughly the same on each partition, i.e satisfies $n_j = \lfloor \frac{n}{m}\rfloor$ and that the 
number $l=l_n$ of subsample points also is equal on each subsample.

For $j =1,...,m$, and $1\leq l \leq \frac{n}{m}$ let $\tilde I_{j,l}:=\{i_{j,1}, ..., i_{j,l}\} \subseteq I_j$, with $I_j$ as above ($\tilde I_{j,l}$ 
denotes the set of indices of subsampled inputs on each $\cX_j$).   
For each subsample $\cD_j$, with a regularization parameter $\lam >0$, we compute a local estimator
\begin{equation*}
\label{def:local_estimator2}
\hat f_{\cD_j}^{\lam}:= \sum_{i \in \tilde I_{j,l}} \alpha_j^{(i)}(\lam) \hat K_j(x_i, \cdot) \in \hat \cH_{j,l} \; ,
\end{equation*}
where $\alpha_j \in \mbr^{\frac{n}{m}}$ is given in \eqref{estimator:nystrom}, with $n$ replaced by $\frac{n}{m}$. 
The overall estimator is constructed as above and defined by
\begin{equation}
\label{def:global_estimator2}
\hat f^{\lam}_{\cD}:=\sum_{j=1}^m \hat f_{\cD_j}^{\lam} \;, 
\end{equation}
which by construction  decomposes according to the direct sum $\cH=\hat \cH_{1} \oplus ...\oplus \hat \cH_{m}$\;. 
Then we have:

\begin{theo}
\label{theo:localnys}
Let $r = \min(r_1 ,..., r_m)$.  
If the number $l$ of subsampled points on each local set satisfies 
\begin{equation}
l \sim n^\beta \;, \quad \beta =  \frac{1+\gamma}{2r+1+\gamma}\;,
\end{equation}
and if the number of local sets satisfies
\begin{equation*}
  m \lesssim n^{\alpha} \;,\quad  \alpha \leq \frac{2r}{2r+1+\gamma}  \;, 
\end{equation*}
then  the choice \eqref{def:lamchoice} for the regularization parameter $\lam_n$ guarantees the error bound 
\begin{equation}
\label{eq:upper_part2}
      \mbe\brac{ \cE ( \hat f_{\cD}^{\lam_{n}} ) - \cE(f_\rho)}  \lesssim  R^2\; \paren{\frac{1}{n}}^{\frac{2r+1}{2r+1+\gamma}}  \,,
\end{equation}
provided $n$ is sufficiently large.
\end{theo}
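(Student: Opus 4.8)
The plan is to combine the error decomposition used for the pure partitioning approach (Theorem \ref{theo:fixed_partition}) with the Nystr\"om bound of Theorem \ref{theo:plain_nys}, applied \emph{locally} on each $\cX_j$. Since the overall estimator decomposes as $\hat f^{\lam}_{\cD}=\sum_{j=1}^m \hat f^{\lam}_{\cD_j}$ along the orthogonal direct sum $\cH=\bigoplus_{j=1}^m\hat\cH_j$, and since $f_\rho=\sum_{j=1}^m f_j$ with $f_j\in\hat\cH_j$, the excess risk splits exactly:
\begin{equation*}
\mbe\brac{\cE(\hat f^{\lam}_{\cD})-\cE(f_\rho)}=\sum_{j=1}^m\mbe\brac{\,\|\hat f^{\lam}_{\cD_j}-f_j\|^2_{L^2(\nu)}\,}=\sum_{j=1}^m p_j\,\mbe\brac{\,\|\hat f^{\lam}_{\cD_j}-f_j\|^2_{L^2(\nu_j)}\,}.
\end{equation*}
So it suffices to control each local term by the local single-machine-with-Nystr\"om bound and then sum against the weights $p_j$.

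First I would condition on the local sample sizes $n_j=|I_j|$. Because $n_j=\lfloor n/m\rfloor$ is prescribed here (the inputs are split into equal chunks), each local problem is exactly an instance of the KRLS–plain-Nystr\"om setting of Section \ref{sec:KRR_Nys} with sample size $n/m$, kernel $\hat K_j$, target $f_j$ satisfying the source condition with exponent $r_j$, and local effective dimension $\cN(T_j,\lam)$. Applying Theorem \ref{theo:plain_nys} on $\cX_j$: provided the \emph{local} number of subsamples obeys $l\gtrsim (n/m)^{\beta_j}$ with $\beta_j\ge\frac{1+\gamma_j}{2r_j+1+\gamma_j}$ and the \emph{local} regularization parameter is chosen as $\lam\simeq (n/m)^{-1/(2r_j+1+\gamma_j)}$, one gets $\mbe\brac{\|\hat f^{\lam}_{\cD_j}-f_j\|^2_{L^2(\nu_j)}}\lesssim R^2 (m/n)^{(2r_j+1)/(2r_j+1+\gamma_j)}$. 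Summing $\sum_j p_j$ times this and using $r=\min_j r_j$ together with the global capacity $\cN(T,\lam)\lesssim\lam^{-\gamma}$ to pass from the individual $\gamma_j$'s and $r_j$'s to the worst-case exponent, the bound becomes $\lesssim R^2 (m/n)^{(2r+1)/(2r+1+\gamma)}$, and the constraint $m\lesssim n^{\alpha}$ with $\alpha\le\frac{2r}{2r+1+\gamma}$ is exactly what absorbs the extra factor $m^{(2r+1)/(2r+1+\gamma)}$ into $(1/n)^{(2r+1)/(2r+1+\gamma)}\cdot n^{\alpha\cdot(\text{something})}$, leaving the claimed rate. This is the same bookkeeping as in the proof of Theorem \ref{theo:fixed_partition}; one just substitutes the Nystr\"om local bound for the exact-KRLS local bound.

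The main obstacle is a subtlety hidden in the prescription ``$n_j=\lfloor n/m\rfloor$ and $l$ equal on each subsample,'' namely reconciling the \emph{global} choice $\lam_n\simeq (1/n)^{1/(2r+1+\gamma)}$ (stated in \eqref{def:lamchoice} and invoked in the theorem) with the \emph{local} scaling that Theorem \ref{theo:plain_nys} naturally wants, i.e. $\lam\simeq (n/m)^{-1/(2r+1+\gamma)}$. These differ by a factor $m^{1/(2r+1+\gamma)}$, so one must re-run the bias–variance estimates underlying Theorem \ref{theo:plain_nys} with a regularization parameter that is larger than the ``locally optimal'' one by a polynomial-in-$m$ factor, and verify that this mismatch is harmless precisely under the budget $m\lesssim n^{\alpha}$, $\alpha\le\frac{2r}{2r+1+\gamma}$. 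Equivalently, one checks that $\lam_n \gtrsim (n/m)^{-1/(2r+1+\gamma)}$ under this constraint on $m$, so that the local estimator is \emph{over}-regularized relative to its own optimum — which only hurts the (local) bias, and the bias contribution is controlled because the source condition on $f_j$ still gives $\|\hat f^{\lam}_{\cD_j}-f_j\|^2\lesssim R^2\lam_n^{2r_j+1}\cdot\cN(T_j,\lam_n)/(\cdots)$ plus a variance term of order $\cN(T_j,\lam_n)/(n/m)$, and Assumption \ref{ass:one}\eqref{Nnew} converts $\sum_j p_j\cN(T_j,\lam_n)$ into $\cN(T,m\lam_n)/m\lesssim (m\lam_n)^{-\gamma}/m$. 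A secondary but routine point is that the Nystr\"om conditioning arguments in Theorem \ref{theo:plain_nys} (bounding $\|(I-P_l)T^{1/2}\|$ etc.) go through verbatim on each $\hat\cH_j$ with $\kappa_j$ in place of $\kappa$, since $\hat K_j$ is bounded by $\kappa_j$; one only needs these constants to be uniformly bounded over $j$, which holds under Assumption 1. Once the over-regularization is shown harmless, the rest is the summation/weighting argument already described, so I expect the write-up to be short, deferring the mechanical parts to the Appendix as the paper does.
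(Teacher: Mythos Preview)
Your overall architecture is right and matches the paper: split the excess risk along the orthogonal decomposition, bound each local Nystr\"om term, and sum against the weights $p_j$. But the two concrete mechanisms you sketch both contain errors that would make the proof fail as written.

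In your second paragraph, applying Theorem~\ref{theo:plain_nys} as a black box on each $\cX_j$ with the \emph{locally optimal} $\lambda\simeq(n/m)^{-1/(2r+1+\gamma)}$ yields a local rate $(m/n)^{(2r+1)/(2r+1+\gamma)}$, and summing against $\sum_j p_j=1$ leaves exactly that. The constraint $m\lesssim n^{2r/(2r+1+\gamma)}$ does \emph{not} absorb the surplus factor $m^{(2r+1)/(2r+1+\gamma)}$: plugging in the maximal $m$ gives only $n^{-(2r+1)(1+\gamma)/(2r+1+\gamma)^2}$, strictly slower than the claimed rate. So the black-box route cannot work; one must indeed re-run the underlying bias--variance estimates with the global $\lambda_n$ from \eqref{def:lamchoice}.

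In your third paragraph the inequality is reversed. Since $(n/m)^{-1/(2r+1+\gamma)}=m^{1/(2r+1+\gamma)}\lambda_n\geq\lambda_n$, the local estimator is \emph{under}-regularized relative to its local optimum, so it is the \emph{variance}, not the bias, that is at risk. The bias is in fact harmless: each local approximation term is $\lesssim R^2\lambda_n^{2(r_j+1/2)}\leq R^2\lambda_n^{2r+1}$, already the target. The local variance is of order $\sigma^2 m\,\cN(T_j,\lambda_n)/n$, and bounding each term separately by $\sigma^2 m\lambda_n^{-\gamma}/n$ is too crude by a factor $m$. The rescue is precisely Assumption~\ref{ass:one}\,\eqref{Nnew}, which you mention but do not connect to this step: it gives $m\sum_j p_j\cN(T_j,\lambda_n)\lesssim\cN(T,m\lambda_n)\lesssim(m\lambda_n)^{-\gamma}$, and the resulting $m^{-\gamma}$ is what balances the summed variance against the bias. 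This is exactly how the paper proceeds (Propositions~\ref{prop:sample_localnys} and~\ref{prop:approx_localnys}); the constraint $m\lesssim n^{2r/(2r+1+\gamma)}$ enters only to ensure $\cB_{n/m}(T_j,\lambda_n)\leq2$ via Lemma~\ref{lem:fixed1}, not to absorb a leftover rate penalty.
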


Clearly, as in Theorem \ref{theo:fixed_partition_improved}, a version of the above result still holds if global smoothness is violated on an exceptional set 
$E$ of small probability as amplified in Assumption \ref{ass:three}. 
We leave a precise formulation (and its proof) to the reader.


\section{Discussion and Comparison to other Approaches}
\label{sec:distributed}
First results establishing learning rates using a KRLS partition-based approach for smoothness parameter $r=0$ and polynomially decaying eigenvalues 
are given in \cite{Tandon16}. The authors establish  
upper  rates of convergence under an additional assumption on the probability of the local sets $\cX_j$, 
requiring the existence of sufficiently high moments in $L^2(\nu)$ of the eigenfunctions of their local covariance operators, uniformly 
over all subsets, in the limit $n \to \infty$. 
However,  
while the decay rate of the eigenvalues can be determined by the smoothness  of  $K$ (see e.g. \cite{Men09} and references therein) 
it is a widely open question  which (general) properties of the kernel imply such assumptions on the eigenfunctions. 
We remove these assumptions on the eigenfunctions of the covariance operator which are restrictive and difficult to prove. In addition,  
we allow locally different degrees of smoothness, improving finite sample bounds.

The paper {\cite{EbStein15}} considers localized SVMs, localized tuned Gaussian kernels and a corresponding direct sum decomposition, 
where a global smoothness assumption is introduced in terms of a scale of Besov spaces. Instead of using the effective dimension $\cN(\lam)$ as a measure for complexity,  
the authors use entropy numbers, obtaining minimax optimal rates.  We extend these results by  going beyond Gaussian 
kernels and allowing more general input spaces than open subsets of $\mbr^d$, allowing in addition the choice of different local kernels.

We also compare the partitioning approach with distributed learning (parallelizing) for KRLS, as recently 
analyzed in \cite{guo17} and \cite{MuBla18}. The distributed learning algorithm is based on a uniform partition of the given 
data set  $$\cD=\{(X_1, Y_1), ..., (X_n, Y_n)\} \subset \cX \times \mbr$$ into $m$ disjoint equal-size subsets $\cD_1, ..., \cD_m$. 
On each subset $\cD_j$, one computes a local estimator $\hat f^{\lam}_{D_j}$ using KRLS (or more general, a spectral regularization method). 
The  final estimator is given by simple averaging: $\bar f^{\lam}_{D}: = \frac{1}{m}\sum_{j=1}^m \hat f^{\lam}_{D_j}$. 

In this setting, one takes a similiar point of view as in our main Theorem \ref{theo:fixed_partition}.  
Both, \cite{guo17} and \cite{MuBla18} provide an answer to the question: How much is the number $m$ of local machines allowed to grow with the sample size $n$ 
in order to preserve minimax optimal rates of convergence? It has been shown by these authors, that 
\[  m_n \sim n^{\alpha}\;, \qquad \alpha\leq \frac{2r}{2r+1+\gamma} \]
gives a sufficient condition.  Here, $r \in (0,\frac{1}{2}]$ is again the regularity parameter of the objective function and $0<\gamma \leq 1$ 
characterizes the decay of 
the effective dimension. Note that this relation between sample size $n$ and number $m$ of subsamples precisely agrees with our equation \eqref{equation:n0}\;. 
We have condensed the computational cost of all these methods in Table \ref{mytable}.

\begin{table}[t]
  \caption{Computational Cost}
  \label{mytable}
  \centering
  \begin{tabular}{ll}
    \hline
    KRLS & ${\cal O}(n^3)$ \\
    localized KRLS & ${\cal O}\paren{(\frac{n}{m})^3 }$, \;$1\leq m \leq n^{\alpha}$\\
    Nystr\"om & ${\cal O}(nl^2+l^3) $,\; $n^{\beta} \leq l \leq n $\\
    local Nys. &${\cal O}(\frac{n}{m}l^2 + l^3) $, \;$n^{\beta}\leq l \leq \frac{n}{m}$ \\
    distributed KRLS & ${\cal O}\paren{(\frac{n}{m})^3 }$, \;$1\leq m \leq n^{\alpha}$\\
    \hline
  \end{tabular}
\end{table}




\section{Conclusion}
\label{sec:conclusion}

We have shown that the twofold effect of partitioning and subsampling  may substantially reduce computational cost, 
if the number of local sets is sufficiently small w.r.t. the amount of data at hand and if the number of subsampled inputs 
is sufficiently large w.r.t. the sample size. 
In both cases we were able to improve 
or amplify the existing results. Furthermore, we derived a rigorous version of the principle 
{\em In partitioning, low smoothness on exceptional sets of small probability does not affect finite sample bounds}. 

\vspace{2cm}

\noindent
{\Large \bf Acknowledgments}
\\
\\
The author acknowledges support by the German Research Foundation under  DFG Grant STE 1074/4-1. 
Furthermore, the author is grateful to  Markus Klein for useful discussions.

\bibliographystyle{plain}
\bibliography{bibliography}

\appendix


\section{Preliminaries}

We let $\cZ = \cX \times \mbr$ denote the sample space, where the input space $\cX$ is a standard Borel space endowed with a fixed unknown probability measure $\nu$.   
The kernel space $\cH$ is assumed to be separable, equipped with a measurable positive semi-definite kernel $K$, bounded by $\kappa$, implying 
continuity of the inclusion map $I : \cH \longrightarrow  L^2(\nu)$. Moreover, we consider the covariance operator 
$T=\kappa^{-2}I^*I=\kappa^{-2}\mbe [ K_X \otimes K_X ] $, which can be shown to be positive self-adjoint trace class (and hence is compact). 
Given a sample $\bx=(x_1, \ldots, x_n) \in \cX^n$, we define the sampling operator $S_{\bx}: \cH \longrightarrow  \mbr^n$ by 
$(S_{\bx}f)_i= \inner{ f, K_{x_i}}_{\cH}$. The empirical 
covariance operator is given by $T_{\bx}=\kappa^{-2}S_{\bx}^*S_{\bx}: \cH \longrightarrow  \cH$. 

For a partition $\{\cX_1, ... , \cX_m \}$ of $\cX$, we denote by $\hat \cH_j$ the local RKHS with extended bounded kernel $\hat K_j$, supported on $\cX_j$, 
with associated covariance 
operator $T_j = \kappa_j^{-2}\mbe_{\nu_j} [ \hat K_j(X, \cdot) \otimes \hat K_j(X, \cdot) ]$. 
Given a sample $\bx_j=(x_{j,1}, \ldots, x_{j,n_j}) \in \cX_j^{n_j}$, we define the sampling operator $S_{\bx_j}: \hat \cH_j \longrightarrow  \mbr^{n_j}$ 
similarly by $(S_{\bx_j}f)_i= \inner{ f, \hat K_j(x_i, \cdot) }_{\hat \cH_j}$.

The global covariance operator acts as an operator on the direct sum $\cH = \hat \cH_1 \oplus ... \oplus \hat \cH_m$. According to \eqref{def:kernel}, it decomposes as
\[ T= \sum_{j=1}^m p_j^{-1} T_j \;,  \]
which can be used to prove that the global effective dimension can be expressed as the sum of the (rescaled) local ones. 

\begin{lemma}[Effective Dimension]
\label{lem:eff_dim_sum}
For any $\lam \in [0,1]$
\[  \sum_{j=1}^m \cN( T_j,p_j \lam  ) = \cN ( T , \lam  ) \;. \]
\end{lemma}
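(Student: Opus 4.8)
The plan is to exploit the block-diagonal structure of the global covariance operator induced by the disjointness of the partition $\{\cX_1,\dots,\cX_m\}$, and then reduce the identity to the elementary additivity of traces over an orthogonal direct sum.

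First I would record the geometry of $\cH=\bigoplus_{j=1}^m \hat\cH_j$: from the weighted norm $\|\hat f\|_\cH^2=\sum_j p_j\|\hat f_j\|_{\hat\cH_j}^2$, polarization gives $\inner{\hat f,\hat g}_\cH=\sum_j p_j\inner{\hat f_j,\hat g_j}_{\hat\cH_j}$, so the summands $\hat\cH_j$ are mutually orthogonal closed subspaces of $\cH$. Combined with the decomposition $T=\sum_{j=1}^m p_j^{-1}T_j$ established in the preliminaries (each $T_j$ understood as extended by zero off $\hat\cH_j$), this shows that $T$ is block-diagonal with respect to $\cH=\bigoplus_j\hat\cH_j$, its $j$-th block being $p_j^{-1}T_j$ on $\hat\cH_j$. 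Since $K$, and hence each $\hat K_j$, is bounded, $T$ and every $T_j$ are positive, self-adjoint and trace class, so all operators appearing below are trace class and the manipulations are legitimate.

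Next, for a positive self-adjoint trace-class operator that is block-diagonal, $A=\bigoplus_j A_j$, one has $(A+\lam I)^{-1}A=\bigoplus_j (A_j+\lam I_j)^{-1}A_j$ and $\tr\big[(A+\lam I)^{-1}A\big]=\sum_j \tr\big[(A_j+\lam I_j)^{-1}A_j\big]$; this is immediate from summing over an orthonormal basis of $\cH$ adapted to the decomposition (equivalently, over eigenbases of the blocks), and the trace of each block is intrinsic, independent of which inner product one uses on $\hat\cH_j$. Applying this with $A_j=p_j^{-1}T_j$ yields $\cN(T,\lam)=\sum_{j=1}^m \tr\big[(p_j^{-1}T_j+\lam I_j)^{-1}p_j^{-1}T_j\big]$.

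Finally I would simplify the $j$-th summand by scaling: since $p_j>0$, $(p_j^{-1}T_j+\lam I_j)^{-1}=p_j(T_j+p_j\lam I_j)^{-1}$, hence $(p_j^{-1}T_j+\lam I_j)^{-1}p_j^{-1}T_j=(T_j+p_j\lam I_j)^{-1}T_j$, whose trace is exactly $\cN(T_j,p_j\lam)$ by the definition of the effective dimension. Summing over $j$ gives $\cN(T,\lam)=\sum_{j=1}^m\cN(T_j,p_j\lam)$, which is the claim; the range $\lam\in[0,1]$ plays no essential role beyond keeping the arguments of $\cN(\cdot,\cdot)$ in the relevant regime (the endpoint $\lam=0$ reads as equality of the ranks of the ranges, which also holds). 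The only mildly delicate point is the bookkeeping of the weighted inner product on $\cH$ when identifying the blocks of $T$; once the orthogonality of the $\hat\cH_j$ and the decomposition $T=\sum_j p_j^{-1}T_j$ are in hand, the remainder is routine operator algebra.
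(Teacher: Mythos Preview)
Your proof is correct and follows precisely the route the paper indicates: the paper does not spell out a proof of this lemma, but remarks just before it that $T=\sum_{j=1}^m p_j^{-1}T_j$ on the orthogonal direct sum $\cH=\bigoplus_j\hat\cH_j$ ``can be used to prove'' the identity, and you carry this out via the block-diagonal trace computation and the scaling $(p_j^{-1}T_j+\lam)^{-1}p_j^{-1}T_j=(T_j+p_j\lam)^{-1}T_j$. Your care with the weighted inner product (noting that the trace of each block is independent of the scaling of the inner product on $\hat\cH_j$) is the only point that requires attention, and you handle it correctly.
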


Finally, our error decomposition relies on the the following standard decomposition

\begin{lemma}
\label{lem:L2norm}
Given $j \in [m]$ let $p_j=\nu(\cX_j)$ and $\nu_j(A) = \nu(A | \cX_j)$, for a measurable $A \subset \cX$. One has 
\[   L^2(\cX, \nu) = \bigoplus_{j=1}^m p_j L^2(\cX_j, \nu_j)  \]
with  
\[ ||f||^2_{L^2(\nu)} = \sum_{j=1}^m p_j|| f_j||^2_{L^2(\nu_j)}\;, \]
where $f=f_1+...+f_m$\;. 
\end{lemma}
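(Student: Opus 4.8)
The statement is a standard measure-theoretic decomposition, and the plan is to exhibit an explicit isometric identification of $L^2(\cX,\nu)$ with the weighted direct sum $\bigoplus_{j=1}^m p_j L^2(\cX_j,\nu_j)$ and then read off the norm identity. Since $\{\cX_1,\dots,\cX_m\}$ is a partition of $\cX$ into measurable sets, every $f\in L^2(\cX,\nu)$ decomposes uniquely as $f=\sum_{j=1}^m f_j$ with $f_j:=f\cdot\mbf{1}_{\cX_j}$; conversely, given functions $g_j$ on the individual $\cX_j$, extending each by zero and summing produces a single function on $\cX$. I would first record that these two assignments are mutually inverse, so that $f$ is determined by, and freely recovered from, the tuple of its restrictions $(f|_{\cX_1},\dots,f|_{\cX_m})$. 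This identifies the map $f\mapsto(f_j)_j$ as a linear bijection onto $\bigoplus_{j=1}^m L^2(\cX_j,\nu_j)$.

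The core of the argument is the single measure identity $\nu(\,\cdot\cap\cX_j)=p_j\,\nu_j$, which holds on the Borel $\sigma$-algebra of $\cX_j$ directly from the definition $\nu_j(A)=p_j^{-1}\nu(A\cap\cX_j)$. First I would use additivity of the integral over the disjoint covering $\cX=\bigcup_{j=1}^m\cX_j$ to split
\[ ||f||^2_{L^2(\nu)} = \int_\cX |f|^2\, d\nu = \sum_{j=1}^m \int_{\cX_j} |f|^2\, d\nu \;. \]
Applying the measure identity inside each summand then gives $\int_{\cX_j}|f|^2\,d\nu = p_j\int_{\cX_j}|f_j|^2\,d\nu_j = p_j\,||f_j||^2_{L^2(\nu_j)}$, which assembles into the claimed identity $||f||^2_{L^2(\nu)}=\sum_{j=1}^m p_j\,||f_j||^2_{L^2(\nu_j)}$. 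This single computation simultaneously proves that $f\mapsto(f_j)_j$ is an isometry for the weighted inner product $\langle (f_j)_j,(g_j)_j\rangle=\sum_{j=1}^m p_j\,\langle f_j,g_j\rangle_{L^2(\nu_j)}$, so the bijection above is in fact a unitary identification, establishing the direct-sum structure. Equivalently, in the internal picture the subspaces of functions supported on distinct $\cX_j$ are mutually orthogonal in $L^2(\cX,\nu)$ since their supports are $\nu$-disjoint, and $L^2(\cX,\nu)$ is their orthogonal sum.

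There is no substantial obstacle here; the proof is essentially a change-of-measure computation and finite additivity, and since $m$ is finite no completeness issue arises. The only points needing a little care are verifying that each restriction $f_j$ genuinely lies in $L^2(\cX_j,\nu_j)$ (which follows a posteriori from the norm identity, whose left-hand side is finite), and being explicit that the symbol $\bigoplus_{j=1}^m p_j L^2(\cX_j,\nu_j)$ is to be read as the Hilbert-space direct sum carrying the $p_j$-scaled inner product, rather than as an internal sum inside a fixed ambient space. I would state this convention at the outset, after which surjectivity, injectivity, and the norm identity together furnish the asserted isometric decomposition.
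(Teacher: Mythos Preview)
Your argument is correct. The paper does not actually supply a proof of this lemma; it is stated in the appendix as a ``standard decomposition'' and used without further justification. Your write-up provides exactly the elementary measure-theoretic verification one would expect: the partition gives the unique splitting $f=\sum_j f\cdot\mbf{1}_{\cX_j}$, and the change of measure $\nu(\,\cdot\cap\cX_j)=p_j\,\nu_j$ yields the norm identity, from which the isometric direct-sum identification follows. There is nothing to compare against, and no gap in what you wrote.
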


For proving our results we additionally need an appropriate Bernstein condition on the noise. 

\begin{assumption}[Distributions]
\label{ass:two}
\begin{enumerate}
\item The sampling is 
random i.i.d., where each observation point $(X_i,Y_i)$ follows the model
$ Y = f_\rho(X) + \epsilon \,,$ 
and the noise satisfies the following Bernstein-type assumption:
For any integer $k \geq 2$ 
and some $\sigma > 0$ and $M>0$: 
\begin{equation}
\label{def:bernstein}
 \mbe[\; |Y-f_\rho(X)|^{k} \; | \; X \;] \leq \frac{1}{2}k! \; \sigma^2 M^{k-2} \quad \nu - {\rm a.s.} \;.  \tag{Bern($M$,$\sigma$)}
\end{equation}
\item
Given $\theta=(M, \sigma, R) \in \mbr^3_+$, the class $\cM :=\cM(\theta , r, b)$ consists of all distributions $\rho$ with 
$X$-marginal $\nu$ and conditional distribution of \;$Y$ given $X$
satisfying \eqref{def:bernstein}  for the deviations and \eqref{ass:SC} for the mean.
\end{enumerate}
\end{assumption}

We remark  that point 1 implies for any $j \in [m]$
\begin{equation}
 \mbe[\; |Y-f_j(X) |^{k} \; | \; X \;] \leq \frac{1}{2}k! \; \sigma^2 M^{k-2} \quad \nu_j - {\rm a.s.},   
\end{equation}
where $\sigma$ and $M$ are uniform with respect to $m$ and $k$. This is what we actually need in our proofs.

\[\]
\noindent
For ease of reading we make use of the following conventions: 
\begin{itemize}
 \item we are interested in a precise dependence of multiplicative constants on the parameter $\sigma, M, R$, $m$, $n$ 
 \item the dependence of multiplicative constants on various other parameters, including the kernel parameter $\kappa$, the parameters arising from the regularization method, $b>1$, $r>0$,  etc.  will
   (generally) be omitted 
 \item the value of $C$ might change from line to line  
 \item the expression ``for $n$ sufficiently large'' means that the statement holds for
   $n\geq n_0$\,, with $n_0$ potentially depending on all model parameters
   (including $\sigma, M$ and $R$)\,.
 \end{itemize}


\section{Proofs of Section \ref{sec:Partitioning_Approach}}

This section is devoted to proving the results of Section \ref{sec:Partitioning_Approach}. Recall that by Assumption \ref{ass:zero} the regression 
function belongs to $\cH$, i.e. admits an unique representation $f=f_1 +...+f_m$\;, with $f_j \in \hat \cH_j$. For proving our error bounds we shall 
use a classical bias-variance decomposition

\begin{equation*}
 f_\rho - \hat f^{\lam}_{\cD}  = \sum_{j=1}^m  f_j- \hat f^{\lam}_{\cD_j} 
 = \sum_{j=1}^m  r_{\lam}( T_{\bx_j}) f_j  + 
     \sum_{j=1}^mg_{\lam}( T_{\bx_j})( T_{\bx_j} f_j -  S^*_{\bx_j}\by_j) \;,
\end{equation*}
where $\hat f^{\lam}_{\cD}$ is given in \eqref{def:global_estimator}, with $r_{\lam}(t)= 1-g_{\lam}(t)t$ and with $g_{\lam}(t)=(t+\lam)^{-1}$. 
The final error bound follows then from 
\begin{align}
\label{eq:final}
 \mbe\brac{\; \cE(f_\rho) - \cE(\hat f^{\lam}_{\cD})  \;}  &= \mbe\brac{\; ||f_\rho - \hat f^{\lam}_{\cD} ||^2_{L^2(\nu)}  \;} \nonumber \\
&\leq \mbe\brac{\; ||\sum_{j=1}^m  r_{\lam}( T_{\bx_j}) f_j||^2_{L^2(\nu)} \;}  +  
\mbe\brac{\;||\sum_{j=1}^mg_{\lam}( T_{\bx_j})( T_{\bx_j} f_j -  S^*_{\bx_j}\by_j)||^2_{L^2(\nu)}\;}   \;.
\end{align}

We proceed by bounding each term in the above decomposition separately.

\begin{prop}[Approximation Error]
\label{prop:approx_error_part}
For any $\lam \in (0,1]$, one has 
\begin{align*}
\mbe\Big[ \big\|  \sum_{j=1}^m r_{\lam}( T_{\bx_j})f_j  \big\|_{L^2(\nu)}^2  \Big] 
 &\leq  C R^2\sum_{j=1}^m p_j  \cB^2_{n_j}( T_j ,\lam) \lam^{2(r_j+\frac{1}{2})} \;,
\end{align*}
where $\cB^2_{n_j}( T_j ,\lam) $ is defined in Proposition \ref{prop:Guo_nips} and where $ C$ does not depend on $(\sigma, M,R)\in \mbr^3_+$\;.
\end{prop}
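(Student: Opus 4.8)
The statement is an upper bound on the (squared $L^2(\nu)$-)approximation error of the localized estimator. Since the global estimator decomposes as $\hat f^\lam_{\cD} = \sum_j \hat f^\lam_{\cD_j}$ with $\hat f^\lam_{\cD_j} \in \hat\cH_j$, and since by Lemma~\ref{lem:L2norm} the spaces $\hat\cH_j \subset L^2(\cX_j,\nu_j)$ sit orthogonally inside $L^2(\nu)$ with $\|g\|^2_{L^2(\nu)} = \sum_j p_j \|g_j\|^2_{L^2(\nu_j)}$, the first move is to split the global norm into a sum of local ones:
\[
\mbe\Big[ \big\| \sum_{j=1}^m r_\lam(T_{\bx_j}) f_j \big\|^2_{L^2(\nu)} \Big]
= \sum_{j=1}^m p_j\, \mbe\Big[ \big\| r_\lam(T_{\bx_j}) f_j \big\|^2_{L^2(\nu_j)} \Big].
\]
Here one has to be a little careful: $r_\lam(T_{\bx_j}) f_j$ is an element of $\hat\cH_j$, and its $L^2(\nu)$-norm only picks up the $\cX_j$-part, which is exactly the $L^2(\nu_j)$-norm (up to the factor $p_j$). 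After this reduction the problem is purely local: for each fixed $j$, bound $\mbe[\|r_\lam(T_{\bx_j}) f_j\|^2_{L^2(\nu_j)}]$ using the local source condition $\|T_j^{-r_j} f_j\|_{\hat\cH_j} \le R$.

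\textbf{Local bound.} For the $j$-th term I would invoke the standard single-machine approximation-error estimate (this is the $m=1$ case of the theory, and presumably is exactly Proposition~\ref{prop:Guo_nips}, whose quantity $\cB^2_{n_j}(T_j,\lam)$ already appears in the statement). Concretely, write the $L^2(\nu_j)$-norm as a $\hat\cH_j$-norm weighted by $T_j^{1/2}$, i.e. $\|r_\lam(T_{\bx_j})f_j\|_{L^2(\nu_j)} = \kappa_j \|T_j^{1/2} r_\lam(T_{\bx_j}) f_j\|_{\hat\cH_j}$, insert the source representation $f_j = T_j^{r_j} v_j$ with $\|v_j\|_{\hat\cH_j}\le R$, and then control the operator norm
\[
\big\| T_j^{1/2} r_\lam(T_{\bx_j}) T_j^{r_j} \big\|
\le \big\| T_j^{1/2}(T_{\bx_j}+\lam)^{-1/2}\big\|\cdot \big\|(T_{\bx_j}+\lam)^{1/2} r_\lam(T_{\bx_j}) (T_{\bx_j}+\lam)^{r_j}\big\| \cdot \big\|(T_{\bx_j}+\lam)^{-r_j} T_j^{r_j}\big\|.
\]
The middle factor is deterministic and of order $\lam^{r_j+\frac12}$ because $r_\lam(t)=\lam/(t+\lam)$ and $0<r_j\le\frac12$; the outer two factors are the usual "effective-dimension–type" random quantities comparing $T_j$ and $T_{\bx_j}$, whose expectation (after the operator-perturbation / concentration argument contained in Proposition~\ref{prop:Guo_nips}) is absorbed into $\cB^2_{n_j}(T_j,\lam)$. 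This yields $\mbe[\|r_\lam(T_{\bx_j})f_j\|^2_{L^2(\nu_j)}] \le C R^2\, \cB^2_{n_j}(T_j,\lam)\, \lam^{2(r_j+\frac12)}$ with $C$ independent of $(\sigma,M,R)$, and summing over $j$ with the weights $p_j$ gives exactly the claimed bound.

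\textbf{Main obstacle.} The genuinely delicate point is the first reduction step, i.e. making rigorous that the localized problem really decouples into $m$ independent single-machine problems. One must check that $T_{\bx_j}$, $S_{\bx_j}$, and the functional calculus $r_\lam(T_{\bx_j})$ are all the \emph{local} objects on $\hat\cH_j$ built from the subsample $\cD_j$ (this is how they are defined in the Preliminaries), so that there is no cross-talk between blocks; and that $\|r_\lam(T_{\bx_j})f_j\|^2_{L^2(\nu)}$ genuinely equals $p_j\|r_\lam(T_{\bx_j})f_j\|^2_{L^2(\nu_j)}$, using that a function in $\hat\cH_j$ vanishes off $\cX_j$. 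A secondary technical wrinkle is that the local sample sizes $n_j$ are themselves random (they are the random counts of points falling in $\cX_j$); but in the setting of the proposition $n_j = \lfloor n/m\rfloor$ is fixed, so conditionally on the partition assignment the inputs in $\cD_j$ are i.i.d.\ from $\nu_j$ and the standard per-machine estimate applies verbatim. Once these bookkeeping issues are handled, the rest is an application of Proposition~\ref{prop:Guo_nips} termwise.
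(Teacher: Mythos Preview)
Your plan is correct and matches the paper's proof essentially step for step: orthogonal splitting via Lemma~\ref{lem:L2norm}, the identity $\|g\|_{L^2(\nu_j)}=\|T_j^{1/2}g\|_{\hat\cH_j}$, the local source condition, and the operator-norm sandwich $T_j^{1/2} r_\lam(T_{\bx_j}) T_j^{r_j}$ with the middle factor giving $\lam^{r_j+1/2}$ and the outer factors controlled through Proposition~\ref{prop:Guo_nips}. The only point the paper makes more explicit than you do is the mechanism for passing to expectation: Proposition~\ref{prop:Guo_nips} is a \emph{high-probability} bound (with a $\log^2(2\eta^{-1})$ prefactor), not an expectation bound, so the paper first derives a tail estimate on $\|T_j^{1/2} r_\lam(T_{\bx_j}) T_j^{r_j}\|$ and then integrates via Lemma~\ref{lem:integration_nips} to obtain the stated inequality in expectation; your phrase ``whose expectation \ldots\ is absorbed into $\cB^2_{n_j}$'' hides exactly this step.
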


\begin{proof}[Proof of Proposition \ref{prop:approx_error_part}]
Recall\footnote{If $I_j: \hat \cH_j \hookrightarrow L^2(\nu_j)$, then $T_j=I_j^*I_j$ and 
$||\sqrt{ T_j}  f||^2_{\hat \cH_j} = \inner{T_j f , f}_{\hat \cH_j}=\inner{I_j f, I_j f}_{L^2(\nu_j)} = ||f||^2_{L^2(\nu_j)}$. Here, we 
identify $I_j f= f$.} that $||\sqrt{ T_j}  f||_{\hat \cH_j} = || f||_{L^2(\nu_j)}$ for any $f \in \hat \cH_j$. According to Lemma \ref{lem:L2norm}, by Assumption \ref{ass:SC} we have
\begin{align}
\label{est:first}
\mbe\Big[ \big\|  \sum_{j=1}^m r_{\lam}(T_{\bx_j}) f_j  \big\|_{L^2(\nu)}^2  \Big] 
 &= \sum_{j=1}^m p_j\mbe\Big[ \big\|r_{\lam}( T_{\bx_j}) f_j   \big\|_{L^2(\nu_j)}^2  \Big] \nonumber \\  
  &= \sum_{j=1}^m p_j\mbe\Big[ \big\| \sqrt{ T_j} r_{\lam}( T_{\bx_j})f_j   \big\|_{\hat \cH_j}^2  \Big]  \nonumber\\  
 &\leq  C R^2\sum_{j=1}^m p_j \mbe\Big[ \big\|\sqrt{ T_j}r_{\lam}( T_{\bx_j})  T_j^{r_j}   \big\|^2  \Big] \;.
\end{align}

We bound for any $j \in [m]$ the expectation by first deriving a probabilistic estimate.   
For any $\eta \in (0,1]$, with probability at least $1-\eta$
\begin{small}
\begin{align*}
||\sqrt{ T_j}r_{\lam}( T_{\bx_j})  T_j^{r_j} || &\leq C \log^2(2\etainv)\cB_{n_j}( T_j ,\lam)\; ||  T_j^{\frac{1}{2}}(  T_j + \lam)^{\frac{1}{2}} || \; 
||( T_{\bx_j}+\lam)^{\frac{1}{2}}r_{\lam}( T_{\bx_j})( T_{\bx_j} + \lam)^{r_j}|| 
              \; ||(  T_j + \lam)^{r_j} T_j^{r_j} || \\
              &\leq  C \log^2(2\etainv)   \cB_{n_j}( T_j ,\lam) \lam^{r_j+\frac{1}{2}} \;.
\end{align*}
\end{small}
Here we have used that 
\[  ||( T_{\bx_j}+\lam)^{\frac{1}{2}}r_{\lam}( T_{\bx_j})(T_{\bx_j} + \lam)^{r}|| \leq C \lam^{r_j+\frac{1}{2}} \]
and that for $s \in [0,\frac{1}{2} ]$
\[  ||(  T_j + \lam)^{s} T_j^{s} || \leq  ||(  T_j + \lam)T_j ||^{s} \leq 1 \]
by Proposition \ref{prop:Cordes} and the spectral theorem. Also, from Proposition \ref{prop:Cordes} and Proposition \ref{prop:Guo_nips}
\[  || ( T_{\bx_j} + \lam)^{-\frac{1}{2}}(T_{j} + \lam)^{\frac{1}{2}}  || 
\leq ||( T_{\bx_j} + \lam)^{-1}( T_{j} + \lam)||^{\frac{1}{2}} \leq \sqrt 8 \log(2\etainv)  \cB^{\frac{1}{2}}_{n_j}( T_j ,\lam)  \;.\]
From Lemma \ref{lem:integration_nips}, by integration
\begin{align*}
\mbe\Big[ \big\|\sqrt{ T_j}r_{\lam}(T_{\bx_j}) T_j^{r_j}   \big\|^2  \Big]  
 &\leq C \cB^2_{n_j}( T_j ,\lam) \lam^{2(r_j+\frac{1}{2})} \;.
\end{align*}
Combining this with \eqref{est:first} finishes the proof. 
\end{proof}

\begin{prop}[Sample Error]
\label{prop:sample_error}
For any $\lam \in (0,1]$, one has 
\[ \mbe
     \Big[\big\|\sum_{j=1}^m g_{\lam}( T_{\bx_j})( T_{\bx_j} f_j -  S^*_{\bx_j}\by_j) \big\|^2_{L^2(\nu)}\Big] \leq C 
     \sum_{j=1}^m p_j\;
           \cB_{n_j}^2( T_j ,\lam)\lam\;  \left( \frac{M}{n_j \lambda} + \sigma\sqrt{\frac{ {{\cal N}( T_j,\lam)}}{{n_j \lam }}}\right)^2 \;,\]
where $\cB^2_{n_j}( T_j ,\lam) $ is defined in Proposition \ref{prop:Guo_nips} and $ C$ does not depend on $(\sigma, M,R)\in \mbr^3_+$\;.
\end{prop}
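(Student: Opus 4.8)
The plan is to mimic the structure of the proof of Proposition \ref{prop:approx_error_part}: first decouple the $L^2(\nu)$-norm into a sum of local $L^2(\nu_j)$-norms via Lemma \ref{lem:L2norm}, then for each $j$ pass to the local RKHS norm using $\|\sqrt{T_j}\,g\|_{L^2(\nu_j)} = \|g\|_{\hat\cH_j}$ for $g \in \hat\cH_j$, and finally bound each local expectation by a probabilistic estimate followed by integration via Lemma \ref{lem:integration_nips}. Concretely, since the summands $g_\lam(T_{\bx_j})(T_{\bx_j}f_j - S^*_{\bx_j}\by_j)$ live in distinct orthogonal summands $\hat\cH_j$ of the direct sum, the cross terms vanish and
\[
\mbe\Big[\big\|\textstyle\sum_{j=1}^m g_\lam(T_{\bx_j})(T_{\bx_j}f_j - S^*_{\bx_j}\by_j)\big\|^2_{L^2(\nu)}\Big]
= \sum_{j=1}^m p_j\, \mbe\Big[\big\|\sqrt{T_j}\,g_\lam(T_{\bx_j})(T_{\bx_j}f_j - S^*_{\bx_j}\by_j)\big\|^2_{\hat\cH_j}\Big].
\]

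Next I would fix $j$ and insert the regularized operator $(T_{\bx_j}+\lam)^{1/2}$ twice, writing
\[
\sqrt{T_j}\,g_\lam(T_{\bx_j})(T_{\bx_j}f_j - S^*_{\bx_j}\by_j)
= \big[\sqrt{T_j}(T_j+\lam)^{1/2}\big]\big[(T_j+\lam)^{-1/2}(T_{\bx_j}+\lam)^{1/2}\big]\big[(T_{\bx_j}+\lam)^{1/2}g_\lam(T_{\bx_j})(T_{\bx_j}+\lam)^{1/2}\big]\big[(T_{\bx_j}+\lam)^{-1/2}(T_{\bx_j}f_j - S^*_{\bx_j}\by_j)\big].
\]
The first factor is bounded by $\lam^{1/2}$ (Proposition \ref{prop:Cordes} and the spectral theorem); the second is bounded by $\sqrt8\,\log(2\etainv)\,\cB_{n_j}^{1/2}(T_j,\lam)$ on the high-probability event from Proposition \ref{prop:Guo_nips}; the third is $\|(T_{\bx_j}+\lam)^{1/2}(T_{\bx_j}+\lam)^{-1}(T_{\bx_j}+\lam)^{1/2}\| = 1$. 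The last factor — the "noise term" $\|(T_{\bx_j}+\lam)^{-1/2}(T_{\bx_j}f_j - S^*_{\bx_j}\by_j)\|_{\hat\cH_j}$ — is the crux: this is where the Bernstein condition \eqref{def:bernstein} (in its local form, valid with uniform $\sigma, M$) enters, and a standard concentration inequality for Hilbert-space-valued sums (Bernstein/Pinelis-type) applied to $\frac{1}{n_j}\sum_{i\in I_j}(T_{\bx_j}+\lam)^{-1/2}(\hat K_j(x_i,\cdot)\,(f_j(x_i)-y_i))$ should yield, with probability at least $1-\eta$,
\[
\big\|(T_{\bx_j}+\lam)^{-1/2}(T_{\bx_j}f_j - S^*_{\bx_j}\by_j)\big\|_{\hat\cH_j}
\;\lesssim\; \log(2\etainv)\,\sqrt\lam\,\Big(\frac{M}{n_j\lam} + \sigma\sqrt{\tfrac{\cN(T_j,\lam)}{n_j\lam}}\Big),
\]
the $\sqrt{\cN(T_j,\lam)}$ arising as the variance proxy since $\tr[(T_{\bx_j}+\lam)^{-1}T_{\bx_j}] \approx \cN(T_j,\lam)$.

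Collecting the four bounds gives, on an event of probability $\geq 1-\eta$,
\[
\big\|\sqrt{T_j}\,g_\lam(T_{\bx_j})(T_{\bx_j}f_j - S^*_{\bx_j}\by_j)\big\|_{\hat\cH_j}
\;\lesssim\; \log^2(2\etainv)\,\cB_{n_j}^{1/2}(T_j,\lam)\,\sqrt\lam\,\Big(\frac{M}{n_j\lam} + \sigma\sqrt{\tfrac{\cN(T_j,\lam)}{n_j\lam}}\Big);
\]
squaring, integrating over the confidence level with Lemma \ref{lem:integration_nips} (which absorbs the $\log^4$ factor into a constant), and summing against $p_j$ yields exactly the claimed bound, with $C$ independent of $(\sigma,M,R)$ since the $R$-dependence drops out entirely in this term and $\sigma, M$ appear only through the displayed factor. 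The main obstacle is the noise-term estimate: one must carefully verify that the local Bernstein moments together with the local effective dimension give the stated variance term uniformly in $j$, and ensure the concentration argument does not secretly require $n_j$ beyond what is assumed — but this is a routine adaptation of the $m=1$ sample-error bound already present in the literature (e.g. \cite{BlaMuc16}) to the restricted operators $T_j$, so I expect no essential difficulty beyond bookkeeping.
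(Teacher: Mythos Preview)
Your overall strategy --- orthogonal decomposition via Lemma~\ref{lem:L2norm}, a probabilistic bound for each $j$, then integration via Lemma~\ref{lem:integration_nips} --- matches the paper's. But there is a genuine gap in the execution.

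First, the four-factor identity you display does not multiply back to $\sqrt{T_j}\,g_\lam(T_{\bx_j})(\cdots)$: with your exponents one obtains an extra factor $(T_{\bx_j}+\lam)$. The signs $\pm\tfrac12$ in the first two brackets are flipped; with the correct signs the first bracket is $\sqrt{T_j}(T_j+\lam)^{-1/2}$, whose norm is $\le 1$, not $\sqrt\lam$. (The $\sqrt\lam$ in the final display arises from rewriting the noise-term estimate $M/(n_j\sqrt\lam)+\sigma\sqrt{\cN(T_j,\lam)/n_j}$, not from this factor.)

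Second, and more substantively, your noise term carries the \emph{empirical} resolvent $(T_{\bx_j}+\lam)^{-1/2}$. The expression $\tfrac{1}{n_j}\sum_{i\in I_j}(T_{\bx_j}+\lam)^{-1/2}\hat K_j(x_i,\cdot)(f_j(x_i)-y_i)$ is \emph{not} a sum of independent summands, since each term depends on all the $x_i$'s through $T_{\bx_j}$; a Bernstein/Pinelis inequality does not apply directly, and the approximation $\tr[(T_{\bx_j}+\lam)^{-1}T_{\bx_j}]\approx\cN(T_j,\lam)$ you invoke would itself require a separate concentration argument. The result you cite from \cite{BlaMuc16} (Proposition~\ref{Geta1_nips} here) is stated with the \emph{population} resolvent $(T_j+\lam)^{-1/2}$, for which the summands are genuinely i.i.d. The paper therefore inserts one further factor $(T_{\bx_j}+\lam)^{-1/2}(T_j+\lam)^{1/2}$ --- bounded via a second application of Proposition~\ref{prop:Guo_nips} --- so that the final piece is exactly $h^\lam_{\bz_j}=(T_j+\lam)^{-1/2}(T_{\bx_j}f_j-S^*_{\bx_j}\by_j)$ and Proposition~\ref{Geta1_nips} applies out of the box. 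This additional insertion is precisely why the stated bound carries $\cB_{n_j}^2$ (two uses of Proposition~\ref{prop:Guo_nips}) and a $\log^3$ before integration, rather than the $\cB_{n_j}$ and $\log^2$ your counting produces.
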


\begin{proof}[Proof of Proposition \ref{prop:sample_error}]
Using again $||\sqrt{ T_j}  f||_{\hat \cH_j} = || f||_{L^2(\nu_j)}$ we find with Lemma \ref{lem:L2norm} 
\begin{align}
\label{est:first_2}
\mbe\Big[ \big\|  \sum_{j=1}^m g_{\lam}( T_{\bx_j})( T_{\bx_j} f_j -  S^*_{\bx_j}\by_j)  \big\|_{L^2(\nu)}^2  \Big] 
 &= \sum_{j=1}^m p_j\mbe\Big[ \big\|g_{\lam}( T_{\bx_j})( T_{\bx_j} f_j -  S^*_{\bx_j}\by_j) \big\|_{L^2(\nu_j)}^2  \Big] \nonumber \\  
  &= \sum_{j=1}^m p_j\mbe\Big[ \big\| \sqrt{ T_j} g_{\lam}( T_{\bx_j})( T_{\bx_j} f_j -  S^*_{\bx_j}\by_j)  \big\|_{\hat \cH_j}^2  \Big]  \;.  
\end{align}
We bound the expectation for each separate subsample of size $n_j$ by first deriving a probabilistic estimate and then by integration. 
For this reason, we use \eqref{Guo:estimate_nips} and Proposition \ref{prop:Cordes} and write for any $ f_j \in \hat \cH_j$,  $j \in [m]$
\begin{align}
\label{eq:needed}
   || \sqrt{ T_j} f_j||_{\hat \cH_j} & \leq || \sqrt{ T_j}(T_j+\lam)^{-1/2} ||\; || (T_j+\lam)^{1/2}(T_{\bx_j}+\lam)^{-1/2}   || \; 
                                ||(T_{\bx_j} +\lam)^{1/2} f_j  ||_{\hat \cH_j}  \nonumber \\
   &\leq ||  T_j(T_j+\lam)^{-1} ||^{1/2} \; || (T_j+\lam)(T_{\bx_j}+\lam)^{-1}   ||^{1/2}\; ||( T_{\bx_j} +\lam)^{1/2} f_j  ||_{\hat \cH_j}
     \nonumber \\ 
   &\leq   C\log(4\etainv)   \cB_{n_j}^{1/2}( T_j ,\lam)  \; ||( T_{\bx_j} +\lam)^{1/2} f_j  ||_{\hat \cH_j} \;,
\end{align}   
holding with probability at least $1-\frac{\eta}{2}$.

We proceed by splitting
\begin{equation}
\label{eq:splitting}
 ( T_{\bx_j} + \lam)^s g_{\lam}( T_{\bx_j})( T_{\bx_j}\fo - S_{\bx_j}^*\by_j )  =  H_{\bx_j}^{(1)}\cdot H_{\bx_j}^{(2)} \cdot h^{\lam}_{\bz_j} \;, 
\end{equation}
with
\begin{eqnarray*}
H_{\bx_j}^{(1)} &:=& ( T_{\bx_j} + \lam)^{\frac{1}{2}} g_{\lam}( T_{\bx_j})(T_{\bx_j} + \lam)^{\frac{1}{2}} ,\\
H_{\bx_j}^{(2)} &:=& ( T_{\bx_j} + \lam)^{-\frac{1}{2}}( T + \lam)^{\frac{1}{2}} , \\
h^{\lam}_{\bz_j} &:=& ( T + \lam)^{-\frac{1}{2}} ( T_{\bx_j}\fo -  S_{\bx_j}^*\by_j ) \;.
\end{eqnarray*}
The first term is bounded.  The second term is now estimated using \eqref{Guo:estimate_nips} once more. One has with probability at least $1-\frac{\eta}{4}$
\begin{equation*}
 H_{\bx_j}^{(2)} \leq \sqrt 8\log(8\etainv) \cB_{\frac{n}{m}}( T_j ,\lam)^{\frac{1}{2}} \;. 
\end{equation*} 
Finally, $h^{\lam}_{\bz_j}$ is estimated using Proposition \ref{Geta2_nips}:
\begin{equation*}
 h^{\lam}_{\bz_j} \leq 2\log(8\etainv)  \left( \frac{M}{n_j\sqrt{\lambda}} + \sigma\sqrt{\frac{ {{\cal N}( T_j, \lam)}}{{n_j}}}\right)\,, 
\end{equation*}
holding with probability at least $1-\frac{\eta}{4}$. Thus, combining the estimates following \eqref{eq:splitting} with \eqref{eq:needed} gives 
for any $j\in [m]$
\[ || \sqrt{ T_j} g_{\lam}( T_{\bx_j})( T_{\bx_j}\fo-  S^{*}_{ \bx_j }\by_j ) ||_{\hat \cH_j} \leq  
   C \log^{3}(8\etainv) \cB_{n_j}( T_j ,\lam)\sqrt{\lam}\;  \left( \frac{M}{n_j \lambda} + \sigma\sqrt{\frac{ {{\cal N}( T_j, \lam)}}{{n_j \lam }}}\right)\;,\]
with probability at least $1-\eta$. By integration using Lemma \ref{lem:integration_nips} one obtains 
\[  \mbe
     \Big[\big\|\sqrt{  T_j} g_{\lam}( B_{\bx_j})( T_{\bx_j}\fo-  S^{*}_{ \bx_j }\by_j ) \big\|^2_{\hat \cH_j}\Big]^{\frac{1}{2}} \leq  
     C \cB_{n_j}( T_j ,\lam)\sqrt \lam \;  \left( \frac{M}{n_j \lambda} + \sigma\sqrt{\frac{ {{\cal N}( T_j, \lam)}}{{n_j \lam }}}\right) \;.\] 
Combining this with \eqref{est:first_2} implies
\[ \mbe
     \Big[\big\|\sum_{j=1}^m g_{\lam}( T_{\bx_j})( T_{\bx_j}f_j -  S^*_{\bx_j}\by_j) \big\|^2_{L^2(\nu)}\Big] \leq C 
     \sum_{j=1}^m p_j\;
           \cB_{n_j}^2( T_j ,\lam)\lam\;  \left( \frac{M}{n_j \lambda} + \sigma\sqrt{\frac{ {{\cal N}( T_j, \lam)}}{{n_j \lam }}}\right)^2 \;,\]
where $ C$ does not depend on $(\sigma, M,R)\in \mbr^3_+$.
\end{proof}

\noindent
We are now ready to prove Theorem \ref{theo:fixed_partition}. 

\begin{proof}[Proof of Theorem \ref{theo:fixed_partition}]
Let the regularization parameter $\lam_n$ be chosen as
\begin{equation}
\lam_n = \min\paren{1, \paren{\frac{\sigma^2}{R^2n}}^{\frac{1}{2r+1+\gamma}} }\;,
\end{equation}
with $r=\min(r_1, ..., r_m)$ and assume that $n_j =\lfloor \frac{n}{m}\rfloor$. 
Note that by Lemma \ref{lem:fixed1} we have $\cB_{\frac{n}{m}}( T_j, \lam_n ) \leq 2$ for any $j \in [m]$, provided $n>n_0$, with $n_0$ 
given by \eqref{def:n0}. Since $\lam_n^{r_j} \leq \lam_n^{r}$ for any $j \in [m]$, the approximation error bound becomes by Proposition \ref{prop:approx_error_part} 
\begin{align}
\label{est:approxi}
\mbe\Big[ \big\|  \sum_{j=1}^m r_{\lam_n}( T_{\bx_j})f_j  \big\|_{L^2(\nu)}^2  \Big] 
 &\leq  C R^2\sum_{j=1}^m p_j   \lam_n^{2(r_j+\frac{1}{2})} \nonumber \\
 &\leq C R^2 \; \lam_n^{2(r+\frac{1}{2})}\;,
\end{align}
where we also used that $\sum_j p_j = 1$. 

For estimating the sample error firstly observe that 
\[  \frac{Mm}{n\lam_n} \leq R\lam_n^r \]
if
\[  n > \paren{m\;\frac{M}{R}}^{\frac{2r+1+\gamma}{r+\gamma}} \paren{\frac{R}{\sigma}}^{\frac{2(r+1)}{r+\gamma}} =: n_1\;. \]
Thus, from Proposition \ref{prop:sample_error} we obtain (recalling again that $\cB_{\frac{n}{m}}( T_j, \lam_n ) \leq 2$)
\begin{align}
\label{est:sample}
\mbe
     \Big[\big\|\sum_{j=1}^m g_{\lam_n}( T_{\bx_j})( T_{\bx_j} f_j -  S^*_{\bx_j}\by_j) \big\|^2_{L^2(\nu)}\Big] 
&\leq C \lam_n \sum_{j=1}^m p_j\paren{ R\lam_n^{r} + \sigma \sqrt{\frac{m\cN(T_j, \lam_n)}{n\lam_n}}}^2 \;.
\end{align}
We proceed by applying $(a+b)^2 \leq 2(a^2+b^2)$. Observe that by our Assumption \ref{ass:one}\;, $2.$ 
\begin{align}
\label{est:step}
 \sum_{j=1}^m p_j \sigma^2 \frac{m\cN(T_j, \lam_n)}{n\lam_n} &=  \sigma^2 \frac{m}{n\lam_n}  \sum_{j=1}^m p_j \; \cN(T_j, \lam_n) \nonumber  \\
 &\leq C  \frac{\sigma^2}{n\lam_n} \cN( T , m\lam_n) \nonumber \\
 &\leq C m^{-\gamma}\frac{\sigma^2}{n\lam_n} \lam_n^{-\gamma} \nonumber \\
 &\leq C R\lam_n^r \;,
\end{align}
by definition of $\lam_n$. Finally, combining \eqref{eq:final} with \eqref{est:step}, \eqref{est:sample} and \eqref{est:approxi} proves the theorem, provided 
\begin{equation}
\label{def:n01}
n>\max(n_0, n_1) \geq C_{M, \sigma, R,\gamma,r} \; m^{1+\frac{\gamma+1}{2r}} \;,
\end{equation}
for some (explicitly given) $C_{M, \sigma, R,\gamma,r} < \infty$.
\end{proof}


\[\]

\begin{proof}[Proof of Theorem \ref{theo:fixed_partition_improved}]
Assume that $n_j = \lfloor \frac{n}{m}\rfloor$. Let the regularization parameter $\lam_n$ be given by \eqref{def:lamchoice2}\;. 
As above, Lemma \ref{lem:fixed1} yields $\cB_{\frac{n}{m}}(T_j,\lam_n) \leq 2$ provided $n>n_0$, with $n_0$ satisfying \eqref{def:n0} (with $r$ replaced by $r_h$). 
From Proposition \ref{prop:approx_error_part} we immediately obtain for the approximation error
\begin{small}
\begin{align*}
\mbe\Big[ \big\|  \sum_{j=1}^{m} r_{\lam_n}( T_{\bx_j}) f_j  \big\|_{L^2(\nu)}^2  \Big] 
 &\leq  C \paren{R_l^2 \paren{\sum_{j \in E} p_j } \lam_n^{2(r_l+\frac{1}{2})} + R_h^2\paren{\sum_{j \in E^c} p_j } \lam_n^{2(r_h+\frac{1}{2})}  }\\
 &\leq CR_h^2\lam_n^{2(r_h+\frac{1}{2})} \;.
\end{align*}
Here we have used that by Assumption \ref{ass:three} 
\[  \paren{\sum_{j \in E} p_j} \leq \paren{\frac{R_h}{R_l}}^2 \lam_n^{2(r_h - r_l)} \quad \mbox{and}\quad \paren{\sum_{j \in E^c} p_j } \leq 1 \;. \]
\end{small}
The bound for the sample error follows exactly as in the proof of Theorem \ref{theo:fixed_partition}. Finally, the error bound \eqref{eq:fixed_upper_part_improved} 
is obtained by using again \eqref{eq:final}. 
\end{proof}


\section{Proofs of Section \ref{sec:KRR_Nys}}

For proving Theorem \ref{theo:plain_nys} we use the non-asymptotic error decomposition given in  Theorem 2 of \cite{RudCamRos15}, 
somewhat reformulated and streamlined using our estimate \eqref{Guo:estimate_nips}. 
We adopt the notation and idea of \cite{RudCamRos15} and write $\hat f^{\lam}_{n,l} = g_{\lam , l}( T_{\bx})S^*_{\bx}\by$, with 
$g_{\lam , l}(T_{\bx}) = V(V^* T_{\bx}V + \lam)^{-1}V^*$ and $VV^*=P_l,$ the projection 
operator onto $\cH_l$, $l \leq n$.  
Consider
\begin{align*}
|| \sqrt{ T}( \hat f^{\lam}_{n,l} - f_\rho) ||_{\cH} &\leq T_1 + T_2 \;
\end{align*} 
with 
\begin{small}
\[
T_1 =||  g_{\lam , l}(T_{\bx})(S^*_{\bx}\by - T_{\bx} f_\rho) ||_{L^2(\nu)} = || \sqrt{ T} g_{\lam , l}(T_{\bx})(S^*_{\bx}\by - T_{\bx} f_\rho) ||_{\cH}
 \]
and 
\[ T_2 = || \sqrt{ T} g_{\lam , l}(T_{\bx})(T_{\bx} f_\rho - f_\rho )||_{\cH} \;,
\]
\end{small}
which we bound in Proposition \ref{prop:sample_nys} and Proposition \ref{prop:approx_nys}\;.

\begin{prop}[Expectation Sample Error KRLS-Nystr\"om]
\label{prop:sample_nys}
\[ 
\mbe \Big[\big\|    g_{\lam , l}(T_{\bx})(S^*_{\bx}\by - T_{\bx} f_\rho)   \big\|_{L^2(\nu)}^2\Big]^{\frac{1}{2}} 
        \leq   C \; \sqrt{\lam} \cB_n( T, \lam)\paren{  \frac{M}{n\lam}  + \sigma \sqrt{\frac{\cN( T, \lam)}{n\lam}}}
 \]
 where $C $ does not depend on $(\sigma , M, R) \in \mbr_+^3$.
\end{prop}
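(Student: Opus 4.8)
The plan is to reduce the Nyström sample error to the quantities already controlled for ordinary KRLS, using the factorization trick from \cite{RudCamRos15} and the concentration estimate \eqref{Guo:estimate_nips}. First I would insert $(T+\lambda)^{1/2}(T+\lambda)^{-1/2}$ and write
\[
 \big\| g_{\lam,l}(T_{\bx})(S^*_{\bx}\by - T_{\bx}f_\rho)\big\|_{L^2(\nu)}
 = \big\| \sqrt{T}\, g_{\lam,l}(T_{\bx})(S^*_{\bx}\by - T_{\bx}f_\rho)\big\|_{\cH}
 \leq A_1\cdot A_2\cdot A_3 ,
\]
with $A_1 = \|\sqrt{T}(T_{\bx}+\lam)^{-1/2}\|$, $A_2 = \|(T_{\bx}+\lam)^{1/2} g_{\lam,l}(T_{\bx})(T_{\bx}+\lam)^{1/2}\|$ together with the projection bookkeeping $VV^*=P_l$, and $A_3 = \|(T_{\bx}+\lam)^{-1/2}(S^*_{\bx}\by - T_{\bx}f_\rho)\|_{\cH}$; actually it is cleaner to route through $(T+\lam)^{-1/2}$ in $A_3$ and absorb the change of operators $(T_{\bx}+\lam)^{-1/2}(T+\lam)^{1/2}$ into a separate factor, exactly as in the proof of Proposition~\ref{prop:sample_error}. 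For $A_1$ one uses $\|\sqrt{T}(T_{\bx}+\lam)^{-1/2}\| \le \|T(T_{\bx}+\lam)^{-1}\|^{1/2}$ via Proposition~\ref{prop:Cordes}, then $\|T(T_{\bx}+\lam)^{-1}\| \le \|(T+\lam)(T_{\bx}+\lam)^{-1}\|$ and the high-probability bound \eqref{Guo:estimate_nips}, giving $A_1 \lesssim \log(c\,\etainv)\,\cB_n(T,\lam)^{1/2}$.

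Next I would handle the genuinely Nyström-specific factor $A_2$. The operator $g_{\lam,l}(T_{\bx}) = V(V^*T_{\bx}V+\lam)^{-1}V^*$ is a regularized inverse restricted to $\mathrm{ran}(P_l)$, so $(T_{\bx}+\lam)^{1/2}g_{\lam,l}(T_{\bx})(T_{\bx}+\lam)^{1/2}$ is bounded by a universal constant: on $\mathrm{ran}(P_l)$ one compares $V^*T_{\bx}V+\lam$ with $P_l(T_{\bx}+\lam)P_l$ from below, and on the orthogonal complement the contribution vanishes; this is the standard ``$H^{(1)}$-type'' term from \eqref{eq:splitting} adapted to the projected problem, and it needs no probability. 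Finally $A_3$, the noise/empirical-mean term, is exactly $h^{\lam}_{\bz}$ from Proposition~\ref{prop:sample_error}, so Proposition~\ref{Geta2_nips} gives
\[
 \|(T+\lam)^{-1/2}(S^*_{\bx}\by - T_{\bx}f_\rho)\|_{\cH}
 \;\leq\; 2\log(c\,\etainv)\Big(\frac{M}{n\sqrt\lam} + \sigma\sqrt{\tfrac{\cN(T,\lam)}{n}}\Big),
\]
with probability at least $1-\eta/2$ (after also paying $\eta/2$ for the $\cB_n$ event). Multiplying the three bounds yields, with probability $\ge 1-\eta$,
\[
 \big\| g_{\lam,l}(T_{\bx})(S^*_{\bx}\by - T_{\bx}f_\rho)\big\|_{L^2(\nu)}
 \;\leq\; C\log^{2}(c\,\etainv)\,\sqrt\lam\,\cB_n(T,\lam)\Big(\frac{M}{n\lam} + \sigma\sqrt{\tfrac{\cN(T,\lam)}{n\lam}}\Big),
\]
after pulling a $\sqrt\lam$ out of the bracket. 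Integrating over the confidence level with Lemma~\ref{lem:integration_nips} removes the logarithmic factors and produces the claimed bound in square-mean, with a constant independent of $(\sigma,M,R)$ since $M$ and $\sigma$ appear only linearly inside the bracket.

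The main obstacle, and the only place where Nyström genuinely differs from plain KRLS, is controlling $A_2$: one must verify that projecting onto $\cH_l$ does not blow up the regularized inverse, i.e. that $\|(T_{\bx}+\lam)^{1/2}V(V^*T_{\bx}V+\lam)^{-1}V^*(T_{\bx}+\lam)^{1/2}\|$ is bounded by an absolute constant. I expect this to follow from a short operator-monotonicity argument ($V^*T_{\bx}V+\lam \succeq \lam$ and $P_l(T_{\bx}+\lam)P_l \preceq T_{\bx}+\lam$, combined on $\mathrm{ran}(V)$ where $VV^*=P_l$), essentially the computation already hidden in \cite{RudCamRos15}'s Theorem~2; everything else is a transcription of the arguments in Proposition~\ref{prop:sample_error} with $T_{\bx_j}$ replaced by $T_{\bx}$ and $n_j$ by $n$.
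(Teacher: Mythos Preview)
Your proposal is correct and follows essentially the same route as the paper's proof: pass from $L^2(\nu)$ to $\cH$ via $\sqrt{T}$, insert $(T_{\bx}+\lam)^{\pm 1/2}$ and $(T+\lam)^{\pm 1/2}$, control the two operator-norm factors by Proposition~\ref{prop:Guo_nips}, bound the Nystr\"om-specific term $\|(T_{\bx}+\lam)^{1/2}g_{\lam,l}(T_{\bx})(T_{\bx}+\lam)^{1/2}\|$ by an absolute constant, apply the noise concentration, and integrate with Lemma~\ref{lem:integration_nips}. Two small corrections: the noise bound you quote is Proposition~\ref{Geta1_nips}, not Proposition~\ref{Geta2_nips} (your displayed inequality is the right one, only the label is off); and the paper does not re-derive the bound on $A_2$ but simply invokes Proposition~6 of \cite{RudCamRos15} to get $\|(T_{\bx}+\lam)^{1/2}g_{\lam,l}(T_{\bx})(T_{\bx}+\lam)^{1/2}\|\leq 1$, which is exactly the operator-monotonicity computation you outline. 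Your log-power count is also off by one (there are three probabilistic events, hence $\log^3$ before integration), but this is immaterial after Lemma~\ref{lem:integration_nips}.
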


\begin{proof}[Proof of Proposition  {\ref{prop:sample_nys}}]

For estimating $T_1$ we use Proposition \ref{prop:Guo_nips} and obtain for any $\lam \in (0, 1]$  with probability at least $1-\eta$
\begin{align*}
T_1 &\leq C\log (2\etainv )\cB_n(T, \lam)\; || ( T_{\bx} + \lam )^{1/2} g_{\lam , l}(T_{\bx}) (S^*_{\bx}\by -  T_{\bx}f_\rho) ||_{\cH} \\
&\leq C \log^{2} (4\etainv )\cB^2_n( T, \lam) \;|| ( T_{\bx} + \lam )^{1/2} g_{\lam , l}(T_{\bx}) ( T_{\bx} + \lam )^{1/2}|| \\
   & \quad  \quad ||( T + \lam )^{-1/2}(S^*_{\bx}\by -  T_{\bx}f_\rho)  ||_{\cH} \;. \\
\end{align*} 
From Proposition 6 in  \cite{RudCamRos15} and from the spectral Theorem we obtain
\[  || (T_{\bx} + \lam )^{1/2} g_{\lam , l}(T_{\bx}) ( T_{\bx} + \lam )^{1/2}|| \leq 1 \;. \]
Thus, applying  Proposition $\ref{Geta1_nips}$ one has with probability at least $1-\eta$
\[ T_1 \leq  C \log^{3} (8\etainv )\; \sqrt{\lam}\;\cB^2_n( T, \lam)\; \paren{  \frac{M}{n\lam}  + \sigma \sqrt{\frac{\cN( T,\lam)}{n\lam}}}\;, \]
where $C$ does not depend on $(\sigma , M, R) \in \mbr_+^3$. Integration using Lemma \ref{lem:integration_nips} gives the result. 
\end{proof}

Before we proceed we introduce the {\emph computational error}: For $u \in [0, \frac{1}{2}]$, $\lam \in (0,1]$ define
\[ {\cal C}_u(l, \lam):= ||(Id-VV^*)( T+\lam )^u||  \;.\]

The proof of the following Lemma can be found in \cite{RudCamRos15}, Proof of Theorem 2. 

\begin{lemma}
\label{lem:comp_error}
For any $u \in [0, \frac{1}{2}]$
\[ {\cal C}_u(l, \lam) \leq  {\cal C}_{\frac{1}{2}}(l, \lam)^{2u} \;.\]
\end{lemma}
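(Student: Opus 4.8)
\textbf{Plan for proving Lemma~\ref{lem:comp_error}.} The statement to establish is that, for any $u\in[0,\tfrac12]$,
\[
{\cal C}_u(l,\lam)=\|(Id-VV^*)(T+\lam)^u\| \leq {\cal C}_{\frac12}(l,\lam)^{2u}\;.
\]
The natural strategy is an interpolation argument built on the fact that $Id-VV^*=Id-P_l$ is an orthogonal projection, hence self-adjoint and idempotent. First I would write $P:=Id-VV^*$, so $P=P^*=P^2$, and note that $\|P(T+\lam)^u\|^2=\|P(T+\lam)^u(T+\lam)^uP\|=\|P(T+\lam)^{2u}P\|$ since $P(T+\lam)^u(T+\lam)^uP=(P(T+\lam)^u)(P(T+\lam)^u)^*$. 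Thus it suffices to control $\|P A^{2u}P\|$ where $A:=T+\lam$ is a positive self-adjoint operator; the claim reduces to $\|PA^{2u}P\|\le\|PAP\|^{2u}$.

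The key step is an operator-monotonicity / operator-convexity fact: for a positive operator $A$ and an orthogonal projection $P$, and for any exponent $t\in[0,1]$, one has $PA^{t}P \preceq (PAP)^{t}$ in the Loewner order — this is a form of the Jensen operator inequality (the map $x\mapsto x^t$ is operator concave on $[0,\infty)$ for $t\in[0,1]$, and $X\mapsto PXP$ is a unital positive map when restricted to the range of $P$). Applying this with $t=2u\in[0,1]$ gives $PA^{2u}P \preceq (PAP)^{2u}$, and taking operator norms (which are monotone on positive operators) yields $\|PA^{2u}P\|\le\|(PAP)^{2u}\|=\|PAP\|^{2u}$, where the last equality uses the spectral mapping theorem for the positive operator $PAP$. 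Combining with the first-paragraph identity, ${\cal C}_u(l,\lam)^2=\|PA^{2u}P\|\le\|PAP\|^{2u}={\cal C}_{\frac12}(l,\lam)^{4u}$, and taking square roots gives the claim.

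A cleaner route that avoids invoking the Jensen operator inequality in full generality is to use the Cordes-type inequality (Proposition~\ref{prop:Cordes} in the excerpt), which for positive operators states $\|B^s C^s\|\le\|BC\|^s$ for $s\in[0,1]$: taking $B=P$, $C=A^{2u}$ is not directly of that shape, but one can instead observe ${\cal C}_u = \|PA^u\| = \|P^s A^u\|$ with $s=1$, and more usefully apply the inequality $\|P A^u\|\le \|PA^{1/2}\|^{2u}\cdot\|A^{1/2}\|^{\,?}$ — here a little care is needed. The honest obstacle, and the place I expect to spend the most effort, is precisely the passage from the $u=\tfrac12$ endpoint to general $u\in[0,\tfrac12]$: the exponent $2u$ sits in $[0,1]$, so one cannot use mere submultiplicativity of norms but genuinely needs operator concavity of $x\mapsto x^{2u}$ together with the projection structure. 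I would therefore record the reduction $\|P A^{2u} P\|\le \|PAP\|^{2u}$ as the heart of the matter and prove it via the Jensen operator inequality applied to the positive unital map $X\mapsto PXP$ acting on operators compressed to $\operatorname{ran}P$, citing the standard reference; the remaining manipulations (the $C^*$-identity $\|BB^*\|=\|B\|^2$ and spectral mapping) are routine.
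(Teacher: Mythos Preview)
Your primary argument via the operator Jensen inequality is correct: writing $P=Id-VV^*$ and $A=T+\lam$, the chain ${\cal C}_u^2=\|PA^{2u}P\|\le\|(PAP)^{2u}\|=\|PAP\|^{2u}={\cal C}_{1/2}^{4u}$ goes through as you describe, with the middle step justified by operator concavity of $x\mapsto x^{2u}$ and the unital compression $X\mapsto PXP$ on $\operatorname{ran}P$.

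However, you should know two things. First, the paper does not actually prove this lemma at all: it simply cites \cite{RudCamRos15}, Proof of Theorem~2. Second, the ``cleaner route'' via Cordes that you started and then abandoned in fact works in one line, and is presumably what the cited reference does. The observation you were missing is that since $P$ is an orthogonal projection, its spectrum is contained in $\{0,1\}$, so $P^{s}=P$ for every $s>0$ by functional calculus. Taking $s=2u\in[0,1]$ in Proposition~\ref{prop:Cordes} with $B=P$ and $C=A^{1/2}$ gives immediately
\[
{\cal C}_u(l,\lam)=\|PA^u\|=\|P^{2u}(A^{1/2})^{2u}\|\le\|PA^{1/2}\|^{2u}={\cal C}_{\frac12}(l,\lam)^{2u}\,.
\]
So your detour through the full Jensen operator inequality is valid but unnecessarily heavy: the projection identity $P^{2u}=P$ reduces the whole lemma to a direct application of the Cordes inequality already available in the paper.
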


\begin{lemma}
\label{lem:complamn}
If $\lam_n$ is defined by \eqref{def:lamchoice}\;and if
\[ l_n \geq n^{\beta}\; \qquad \beta > \frac{\gamma+1}{2r+1+\gamma} \]
one has with probability at least $1-\eta$ 
\[ {\cal C}_{\frac{1}{2}}(l_n, \lam_n) \leq  C \log(2\etainv)\sqrt{\lam_n}  \;, \]
provided $n$ is sufficiently large.
\end{lemma}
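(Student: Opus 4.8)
\textbf{Plan of proof for Lemma \ref{lem:complamn}.}
The strategy is to control the computational error $\cC_{\frac{1}{2}}(l_n,\lam_n) = \|(\mathrm{Id}-VV^*)(T+\lam_n)^{1/2}\|$ by relating the random projection $VV^*=P_{l_n}$ built from the Nystr\"om subsample to the full covariance operator $T$, using the effective dimension as the bookkeeping device. The key input is the standard subsampling estimate (as in \cite{RudCamRos15}, and in the same spirit as our Proposition \ref{prop:Guo_nips}/\eqref{Guo:estimate_nips}): with probability at least $1-\eta$,
\[
\cC_{\frac{1}{2}}(l,\lam)^2 = \|(\mathrm{Id}-P_l)(T+\lam)^{1/2}\|^2 \lesssim \lam \cdot \log^2(2\etainv),
\]
provided the number of subsampled points satisfies $l \gtrsim \log(2\etainv)\,\cN(T,\lam)$ (up to the usual $\kappa^2/\lam$-type factors absorbed into constants under the conventions of the appendix). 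So the whole argument reduces to verifying that this sampling condition is met by the prescribed choice $l_n\geq n^\beta$ and $\lam_n$ from \eqref{def:lamchoice}.

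First I would plug in $\lam_n\simeq (1/n)^{1/(2r+1+\gamma)}$ and the capacity bound $\cN(T,\lam_n)\lesssim \lam_n^{-\gamma}$ from Assumption \ref{ass:one}(2). This gives
\[
\cN(T,\lam_n) \lesssim \lam_n^{-\gamma} \simeq n^{\frac{\gamma}{2r+1+\gamma}}.
\]
Then the sampling requirement $l_n\gtrsim \cN(T,\lam_n)$ reads $n^\beta \gtrsim n^{\gamma/(2r+1+\gamma)}$, which holds for $n$ large precisely because we assumed the \emph{strict} inequality $\beta > \frac{\gamma+1}{2r+1+\gamma} > \frac{\gamma}{2r+1+\gamma}$; the extra slack (indeed, even $\beta>\frac{\gamma}{2r+1+\gamma}$ would suffice here, but the stronger hypothesis is what is available and is harmless) also absorbs the $\log(2\etainv)$ factor once $n$ is large enough relative to the confidence level, and absorbs any additional polynomial-in-$1/\lam_n$ factors of bounded degree coming from the precise form of the concentration bound. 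Having checked the hypothesis of the subsampling estimate, the conclusion $\cC_{\frac{1}{2}}(l_n,\lam_n)\leq C\log(2\etainv)\sqrt{\lam_n}$ follows by taking square roots.

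The main obstacle is bookkeeping rather than conceptual: one must be careful that the sampling bound I invoke is stated for \emph{plain} Nystr\"om (uniform sampling without replacement), which is exactly the setting of Section \ref{sec:KRR_Nys}, and that the threshold on $l$ is in terms of $\cN(T,\lam)$ and not the cruder $1/\lam$ — using $1/\lam_n \simeq n^{1/(2r+1+\gamma)}$ instead would force $\beta\geq 1/(2r+1+\gamma)$, which is weaker information than $\cN$ gives and does not match the stated $\beta$. So the one genuine point to get right is that the effective-dimension version of the subsampling lemma (available from \cite{RudCamRos15}, Proposition 6 and surrounding estimates, together with \eqref{Guo:estimate_nips}) is what is being applied, and that the strict inequality in $\beta$ is exactly what converts ``$l_n \gtrsim \cN(T,\lam_n)$ up to logs and lower-order polynomial factors'' into a statement valid for all sufficiently large $n$.
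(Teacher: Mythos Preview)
Your plan has a real gap in the key step: the sampling condition you invoke, $l \gtrsim \log(2\etainv)\,\cN(T,\lam)$, is the one for approximate leverage-score Nystr\"om, not for \emph{plain} (uniform) Nystr\"om, which is the setting of Section~\ref{sec:KRR_Nys}. For plain Nystr\"om the relevant input is the factorization from \cite{RudCamRos15}, Proposition~3,
\[
\cC_{\frac12}(l,\lam) \;\leq\; \sqrt{\lam}\,\|(T_{\bx_l}+\lam)^{-1}(T+\lam)\|^{1/2},
\]
followed by the concentration bound \eqref{Guo:estimate_nips} applied to the $l$ subsampled points. This yields $\cC_{\frac12}(l,\lam)\leq C\log(2\etainv)\sqrt{\lam}\,\cB_l(T,\lam)^{1/2}$, and what must be checked is that $\cB_l(T,\lam_n)$ stays bounded, i.e.\ that $\tfrac{1}{l_n\lam_n}$ and $\tfrac{\cN(T,\lam_n)}{l_n\lam_n}$ are $o(1)$. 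Under the capacity bound $\cN(T,\lam_n)\lesssim \lam_n^{-\gamma}$ the second (dominant) term forces $l_n\gtrsim \lam_n^{-(1+\gamma)}\simeq n^{(1+\gamma)/(2r+1+\gamma)}$, which is \emph{exactly} the hypothesis $\beta>\tfrac{1+\gamma}{2r+1+\gamma}$ --- not a harmless strengthening of a weaker condition.

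Your own computation only verifies $l_n\gtrsim \cN(T,\lam_n)\simeq n^{\gamma/(2r+1+\gamma)}$ and then waves at ``additional polynomial-in-$1/\lam_n$ factors of bounded degree'' without identifying the degree. But the degree matters: it is precisely one extra power of $1/\lam_n$ (from the $1/(l\lam)$ in $\cB_l$), and this is what pins down the threshold. Saying ``even $\beta>\gamma/(2r+1+\gamma)$ would suffice'' is therefore incorrect for plain Nystr\"om. Also, the $\log(2\etainv)$ factor is not absorbed into a condition on $l_n$; it comes directly from \eqref{Guo:estimate_nips} and remains in the conclusion of the lemma, so no ``$n$ large relative to the confidence level'' argument is needed there. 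Rewriting the proof along the factorization-plus-$\cB_l$ route fixes both issues and makes transparent why the stated $\beta$ is the right one.
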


\begin{proof}[Proof of Lemma \ref{lem:complamn}]
Using Proposition 3 in \cite{RudCamRos15} one has with probability at least $1-\eta$
\begin{align*}
{\cal C}_{\frac{1}{2}}(l, \lam_n) &\leq \sqrt{\lam_n} \;||(T_{\bx_l }+\lam_n)^{-1}(T+\lam_n)||^{\frac{1}{2}} \\
&\leq  C \log(2\etainv)\sqrt{\lam_n} \; \cB^{\frac{1}{2}}_l( T, \lam_n) \;.
\end{align*}
Recall that $\cN(T ,\lam) \leq C_b \lam^{-\frac{1}{b}}$, implying 
\[ \cB_l(T, \lam_n) \leq C\paren{ 1 + \paren{ \frac{2}{l\lam_n} + \sqrt{\frac{\lam_n^{-\gamma}}{l\lam_n}} }^2 } \;.\]
Straightforward calculation shows that 
\[ \frac{2}{l_n\lam_n} = o(1)\;, \quad \mbox{if}\;\; l_n \geq n^{\beta} \;,\; \beta > \frac{1}{2r+1+\gamma}  \]
and
\[  \sqrt{\frac{\lam_n^{-\gamma}}{l_n\lam_n}}= o(1)\;, \quad \mbox{if}\;\; l_n \geq n^{\beta} \;,\; \beta > \frac{\gamma+1}{2r+1+\gamma}\;. \]
Thus, ${\cal C}_{\frac{1}{2}}(l_n, \lam_n) \leq  C \log(2\etainv)\sqrt{\lam_n}$,\; with probability at least $1-\eta$.
\end{proof}


\begin{prop}[Expectation Approximation- and Computational Error KRLS-Nystr\"om]
\label{prop:approx_nys}
Assume that
\[  l_n \geq n^{\beta} \;, \qquad  \beta > \frac{\gamma+1}{2r+1+\gamma}  \]
and $(\lam_n)_n$ is chosen according to \eqref{def:lamchoice}. If $n$ is sufficiently large 
\[ 
\mbe \Big[\big\|    \sqrt{T} g_{\lam_n , l_n}(T_{\bx})(T_{\bx} \fo - \fo )   \big\|_{L^2(\nu)}^2\Big]^{\frac{1}{2}} 
        \leq   C \; a_n \;,
 \]
where $C $ does not depend on $(\sigma , M, R) \in \mbr_+^3$.
\end{prop}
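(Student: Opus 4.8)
The plan is to bound the deterministic-input term $T_2 := \|\sqrt T\, g_{\lam_n,l_n}(T_\bx)(T_\bx\fo-\fo)\|_\cH$ by separating the genuine regularization bias from the error caused by compressing onto $\cH_l$, and to control each piece with the operator-calculus estimates already at hand. Since no noise enters $T_2$, only the source constant $R$ appears (linearly, through $\fo = T^r v$ with $\|v\|_\cH\le R$, i.e. Assumption~\ref{ass:zero} with $m=1$), so the whole job is controlling the random operators. Writing $P_l = VV^*$ and using $g_{\lam,l}(T_\bx) = P_l g_{\lam,l}(T_\bx)P_l$ together with the resolvent identity $g_{\lam,l}(T_\bx)(P_l T_\bx P_l + \lam P_l) = P_l$, one gets $g_{\lam,l}(T_\bx)T_\bx = P_l - \lam g_{\lam,l}(T_\bx) + g_{\lam,l}(T_\bx)P_l T_\bx(I-P_l)$, which splits $T_2$ into a \emph{bias} part $\lesssim \lam_n\|\sqrt T\, g_{\lam_n,l_n}(T_\bx)\fo\|_\cH$, an \emph{orthogonal-complement} part $\|\sqrt T(I-P_l)\fo\|_\cH$, and a \emph{cross} part $\|\sqrt T\, g_{\lam_n,l_n}(T_\bx)P_l T_\bx(I-P_l)\fo\|_\cH$.

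For each piece I would insert the usual shifts $(T+\lam_n)^{\pm1/2}$ and $(T_\bx+\lam_n)^{\pm1/2}$ so as to isolate four kinds of factor: the random ratios $\|(T+\lam_n)^{1/2}(T_\bx+\lam_n)^{-1/2}\|$ and its adjoint, estimated by Cordes' inequality (Proposition~\ref{prop:Cordes}) and \eqref{Guo:estimate_nips} as $C\log(2\etainv)\,\cB_n^{1/2}(T,\lam_n)$; the bounded Nystr\"om filter $\|(T_\bx+\lam_n)^{1/2}g_{\lam_n,l_n}(T_\bx)(T_\bx+\lam_n)^{1/2}\|\le 1$ (Proposition~6 of \cite{RudCamRos15}); the scalar spectral bounds $\|\sqrt T(T+\lam_n)^{-1/2}\|\le1$, $\|(T+\lam_n)^{-r}T^r\|\le1$ and $\|(T+\lam_n)^{-1/2}T^r\|\lesssim\lam_n^{r-1/2}$ (valid for $0<r\le\frac12$); and the computational-error operators $(I-P_l)(T+\lam_n)^s$, bounded via ${\cal C}_s(l_n,\lam_n)\le{\cal C}_{1/2}(l_n,\lam_n)^{2s}$ (Lemma~\ref{lem:comp_error}) and then via ${\cal C}_{1/2}(l_n,\lam_n)\le C\log(2\etainv)\sqrt{\lam_n}$ (Lemma~\ref{lem:complamn}) --- and it is exactly at this last point that the hypothesis $\beta>\frac{\gamma+1}{2r+1+\gamma}$ is consumed. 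The bias part then contributes $\lesssim R\,\cB_n(T,\lam_n)\,\lam_n^{r+1/2}$, and the orthogonal-complement part $\lesssim R\,{\cal C}_{1/2}(l_n,\lam_n)^{1+2r}\lesssim R\,\lam_n^{r+1/2}$ since $\|\sqrt T(I-P_l)\fo\|\le\|(I-P_l)\sqrt T\|\,\|(I-P_l)T^r\|\,R\le R\,{\cal C}_{1/2}(l_n,\lam_n)\,{\cal C}_{1/2}(l_n,\lam_n)^{2r}$.

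The delicate piece, and the one I expect to cost the most care, is the cross term: bounding $\|T_\bx^{1/2}(I-P_l)\|$ crudely by $\|T_\bx^{1/2}\|\le1$ would leave only a factor ${\cal C}_{1/2}^{2r}$ and hence the suboptimal rate $\lam_n^{r}$. Instead I would use $\|T_\bx^{1/2}(I-P_l)\|^2 = \|(I-P_l)T_\bx(I-P_l)\| \le \|(I-P_l)(T_\bx+\lam_n)(I-P_l)\| \le \|(T_\bx+\lam_n)^{1/2}(T+\lam_n)^{-1/2}\|^2\,{\cal C}_{1/2}(l_n,\lam_n)^2$, which extracts one factor of ${\cal C}_{1/2}$ from the empirical operator while the remaining factor ${\cal C}_{1/2}^{2r}$ comes from $\|(I-P_l)T^r\|$ acting on $\fo$; this recovers the full power $1+2r$ and gives the cross part $\lesssim R\,\cB_n(T,\lam_n)\,\lam_n^{r+1/2}$ too. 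Collecting the three pieces on the intersection of the favourable events yields, for some fixed powers of $\log(2\etainv)$, a bound $T_2 \le C\,R\,\log^{c}(2\etainv)\,\cB_n(T,\lam_n)\,\lam_n^{r+1/2}$ with probability at least $1-\eta$; for $n$ large $\cB_n(T,\lam_n)\le2$ (Lemma~\ref{lem:fixed1} with $m=1$), and integrating this high-probability estimate with Lemma~\ref{lem:integration_nips} gives $\mbe\big[\|\sqrt T\, g_{\lam_n,l_n}(T_\bx)(T_\bx\fo-\fo)\|_{L^2(\nu)}^2\big]^{1/2}\lesssim R\,\lam_n^{r+1/2} = C a_n$, with $C$ free of $(\sigma,M,R)$ because $R$ was carried explicitly and every other constant is a fixed model constant. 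This is precisely the order of the sample-error bound of Proposition~\ref{prop:sample_nys}, so that adding $T_1$ and $T_2$ and squaring yields Theorem~\ref{theo:plain_nys}.
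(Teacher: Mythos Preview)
Your proof is correct and follows essentially the same route as the paper: the resolvent identity you derive produces exactly the paper's three pieces $(a)=\|\sqrt T(I-P_l)T^r\|$, $(b)=\lam\|\sqrt T g_{\lam,l}(T_\bx)T^r\|$, $(c)=\|\sqrt T g_{\lam,l}(T_\bx)(T_\bx+\lam)(I-P_l)T^r\|$ (your ``cross'' term equals $(c)$ since $g_{\lam,l}(T_\bx)(I-P_l)=0$), and you invoke the same toolkit --- Lemma~\ref{lem:comp_error}, Lemma~\ref{lem:complamn}, Proposition~\ref{prop:Guo_nips}, Proposition~6 of \cite{RudCamRos15}, and integration via Lemma~\ref{lem:integration_nips}. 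The only cosmetic difference is that for $(c)$ the paper bounds $\|(T_\bx+\lam)^{1/2}g_{\lam,l}(T_\bx)(T_\bx+\lam)\|\,{\cal C}_r$ directly, whereas you pass through $\|T_\bx^{1/2}(I-P_l)\|$ and the reverse ratio $\|(T_\bx+\lam)^{1/2}(T+\lam)^{-1/2}\|$; both yield $\sqrt{\lam_n}\,{\cal C}_{1/2}^{2r}$ (for the record, the $m=1$ bound $\cB_n(T,\lam_n)\le 2$ is Lemma~\ref{lem:blam} in the paper rather than Lemma~\ref{lem:fixed1}).
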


\begin{proof}[Proof of Proposition \ref{prop:approx_nys}]
Using that $||T^{-r}f_\rho||_{\cH}\leq R$ one has for any $\lam \in (0,1]$ 
\begin{equation}
\label{eq:dec_nervig}
T_2 \leq C R\;(\;(a) + (b) + (c)\;) \;,
\end{equation}
with
\[  (a) = ||\sqrt{ T} (Id-VV^*)  T^r  || \;, \qquad (b) = \lam || \sqrt{ T}g_{\lam , l}(T_{\bx}) T^r || \]
and
\[  (c) = || \sqrt{ T} g_{\lam , l}(T_{\bx}) ( T_{\bx} + \lam )(Id-VV^*) T^r|| \;.  \]
Since $(Id-VV^*)^2 = (Id-VV^*)$ we obtain by Lemma \ref{lem:comp_error}
\[ (a) \leq {\cal C}_{\frac{1}{2}}(l, \lam) \;{\cal C}_r(l, \lam) \leq {\cal C}_{\frac{1}{2}}(l, \lam)^{2r+1}  \;.\]
Furthermore, using \eqref{Guo:estimate_nips}\;, with probability at least $1-\frac{\eta}{2}$
\begin{align*}
(b) &\leq   C\log^{2}(8\etainv)\lam \cB^{\frac{1}{2}+r}_n( T, \lam) \;|| ( T_{\bx} + \lam )^{1/2} g_{\lam , l}(T_{\bx}) ( T_{\bx} + \lam )^{r}|| \\
&\leq C\log^{2}(8\etainv)\lam^{\frac{1}{2}+r} \cB^{\frac{1}{2}+r}_n( T, \lam)\;, 
\end{align*} 
by again using Proposition 6 in  \cite{RudCamRos15}. 

The last term gives with probability at least $1-\frac{\eta}{2}$
\begin{align*}
(c) &\leq C\log(8\etainv) || (T_{\bx} + \lam )^{1/2} g_{\lam , l}(T_{\bx}) ( T_{\bx} + \lam )|| \;{\cal C}_r(l, \lam)  \\
&\leq  C\log(8\etainv) \sqrt{\lam }\; {\cal C}_{\frac{1}{2}}(l, \lam)^{2r}\;.
\end{align*} 
Combining the estimates for $(a)$, $(b)$ and $(c)$ gives 
\[ T_2 \leq CR\log^2(8\etainv)\paren{ {\cal C}_{\frac{1}{2}}(l, \lam)^{2r+1} + \lam^{\frac{1}{2}+r} \cB^{\frac{1}{2}+r}_n( T, \lam) + \sqrt{\lam }\; {\cal C}_{\frac{1}{2}}(l, \lam)^{2r} } \;.\]
We now choose $\lam_n$ according to \eqref{def:lamchoice}\;. 
Notice that by Lemma \ref{lem:blam} one has $\cB_n(T, \lam_n) \leq C$ for any $n$ 
sufficiently large. 
Applying Lemma \ref{lem:complamn} we obtain, with probability at least $1-\eta$
\[ T_2 \leq C\log^2(8\etainv)R \lam_n^{r + \frac{1}{2}} \;,\]
provided $n$ is sufficiently large and 
\[  l_n \geq n^{\beta} \;, \qquad  \beta > \frac{\gamma+1}{2r+1+\gamma}\;. \]
The result follows from integration by applying Lemma \ref{lem:integration_nips} and recalling that $a_n = R\lam_n^{r+\frac{1}{2}}$\;.
\end{proof}

\noindent
With these preparations we can now prove the main result of Section \ref{sec:KRR_Nys}. 

\begin{proof}[Proof of Theorem \ref{theo:plain_nys}]
The proof easily follows by combining Proposition \ref{prop:sample_nys} and Proposition \ref{prop:approx_nys}\;. In particular, the estimate 
for the sample error by choosing $\lam=\lam_n$ follows by recalling that $\cN( T, \lam_n) \leq C_\gamma \lam_n^{-\gamma}$, 
by definition of $(a_n)_n$ in Theorem \ref{theo:plain_nys}\;, by Lemma \ref{lem:blam} and by 
\[  \frac{M}{n\lam_n}=o\paren{ \sigma\sqrt{ \frac{\lam_n^{-\gamma}}{n\lam_n} } } \;. \]
\end{proof}


\section{Proofs of Section \ref{sec:localnysation}}

Following the lines in the previous sections we divide the error analysis in bounding the Sample error, Approximation error and Computational error.

\begin{prop}[Sample Error]
\label{prop:sample_localnys}
Let $\lam_n$ be defined as in \eqref{def:lamchoice}. We have  
\[ \mbe\Big[ \big\|  \sum_{j=1}^m g_{\lam_n,l}( T_{\bx_j})( T_{\bx_j}\hat f_j -  S^*_{\bx_j}\by_j)  \big\|_{L^2(\nu)}^2  \Big] 
\leq C R^2\; \paren{\frac{\sigma^2}{R^2n}}^{\frac{2(r+\frac{1}{2})}{2r+1+\gamma}} \;, \]
where $n$ has to be chosen sufficiently large, i.e.
\begin{equation*}
n>  C_{\sigma, R, \gamma, r}\; m^{1+ \frac{\gamma+1}{2r+1+\gamma}}\;, 
\end{equation*}
for some $C_{\sigma, R, \gamma, r}< \infty$. 
Moreover, $C$ does not depend on the model parameter $\sigma, M,R \in \mbr^3_+$.
\end{prop}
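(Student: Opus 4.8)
The plan is to mirror the sample-error analysis of Proposition~\ref{prop:sample_error} (localization) but with the Nystr\"om modification from Proposition~\ref{prop:sample_nys} carried out on each local set. First I would use $\|\sqrt{T_j}f\|_{\hat\cH_j} = \|f\|_{L^2(\nu_j)}$ together with Lemma~\ref{lem:L2norm} to decompose
\[
\mbe\Big[ \big\|  \sum_{j=1}^m g_{\lam_n,l}( T_{\bx_j})( T_{\bx_j}\hat f_j -  S^*_{\bx_j}\by_j)  \big\|_{L^2(\nu)}^2  \Big]
= \sum_{j=1}^m p_j \, \mbe\Big[ \big\| \sqrt{T_j}\, g_{\lam_n,l}( T_{\bx_j})( T_{\bx_j}\hat f_j -  S^*_{\bx_j}\by_j)  \big\|_{\hat\cH_j}^2  \Big],
\]
reducing everything to a per-subsample bound on a sample of size $n_j = \lfloor n/m\rfloor$. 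For each $j$ I would then reproduce the argument of Proposition~\ref{prop:sample_nys}: insert $(T_{\bx_j}+\lam_n)^{1/2}(T_{\bx_j}+\lam_n)^{-1/2}$ appropriately, use the Nystr\"om identity $g_{\lam_n,l}(T_{\bx_j}) = V_j(V_j^*T_{\bx_j}V_j+\lam_n)^{-1}V_j^*$ with Proposition~6 of \cite{RudCamRos15} to get $\|(T_{\bx_j}+\lam_n)^{1/2}g_{\lam_n,l}(T_{\bx_j})(T_{\bx_j}+\lam_n)^{1/2}\|\le 1$, control $\|(T_{\bx_j}+\lam_n)^{-1/2}(T_j+\lam_n)^{1/2}\|$ and $\|\sqrt{T_j}(T_j+\lam_n)^{-1/2}\|$ via Proposition~\ref{prop:Guo_nips}/\ref{prop:Cordes}, and bound the noise term $\|(T_j+\lam_n)^{-1/2}(T_{\bx_j}f_j - S^*_{\bx_j}\by_j)\|_{\hat\cH_j}$ by Proposition~\ref{Geta1_nips}/\ref{Geta2_nips}. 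This yields, with probability $1-\eta$,
\[
\big\| \sqrt{T_j}\, g_{\lam_n,l}( T_{\bx_j})( T_{\bx_j}\hat f_j -  S^*_{\bx_j}\by_j)  \big\|_{\hat\cH_j}
\le C\log^3(8\etainv)\,\cB_{n_j}(T_j,\lam_n)\sqrt{\lam_n}\paren{\frac{Mm}{n\lam_n} + \sigma\sqrt{\frac{m\,\cN(T_j,\lam_n)}{n\lam_n}}}.
\]

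Next I would integrate out the tail (Lemma~\ref{lem:integration_nips}) and invoke Lemma~\ref{lem:fixed1} to get $\cB_{\lfloor n/m\rfloor}(T_j,\lam_n)\le 2$ for all $j$ once $n$ exceeds the stated threshold $C_{\sigma,R,\gamma,r}\,m^{1+(\gamma+1)/(2r+1+\gamma)}$; note this threshold, rather than the one in Theorem~\ref{theo:fixed_partition}, is dictated precisely by the subsampling requirement $l\sim n^\beta\le n/m$ with $\beta = (1+\gamma)/(2r+1+\gamma)$. Then summing over $j$, applying $(a+b)^2\le 2(a^2+b^2)$, using $\sum_j p_j = 1$ for the $M$-term and the capacity compatibility Assumption~\ref{ass:one} (i.e. $m\sum_j p_j\cN(T_j,\lam_n)\lesssim\cN(T,m\lam_n)\lesssim (m\lam_n)^{-\gamma}$, so the $m$-factors cancel as in \eqref{est:step}) for the $\sigma$-term, both contributions collapse to $C\,\lam_n\cdot R^2\lam_n^{2r} = CR^2\lam_n^{2(r+1/2)}$ by the definition $\lam_n\simeq(\sigma^2/(R^2n))^{1/(2r+1+\gamma)}$, which is exactly the claimed rate $CR^2(\sigma^2/(R^2n))^{2(r+1/2)/(2r+1+\gamma)}$.

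The main obstacle I anticipate is bookkeeping the $m$-dependence correctly: the per-subsample errors carry an extra factor from $n_j\approx n/m$, and one must verify that the capacity assumption \eqref{Nnew} is strong enough to absorb the resulting $m$'s without degrading the rate, while simultaneously the $\cB$-term's boundedness and the Nystr\"om computational-error control (via Lemma~\ref{lem:complamn} applied locally, needed for the companion approximation-error proposition) both impose lower bounds on $n/m$ relative to $l$. Keeping these three constraints —$l\sim n^\beta$, $l\le n/m$, and $\cB_{n/m}(T_j,\lam_n)\le 2$— mutually consistent is where the hypothesis $n\gtrsim m^{1+(\gamma+1)/(2r+1+\gamma)}$ comes from, and making that derivation airtight is the only genuinely delicate point; the rest is a routine transcription of the $m=1$ Nystr\"om sample-error estimate into the direct-sum setting.
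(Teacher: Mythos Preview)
Your approach is correct and essentially identical to the paper's: decompose via Lemma~\ref{lem:L2norm}, apply the Nystr\"om sample-error bound of Proposition~\ref{prop:sample_nys} on each local set (the paper just cites it as a black box, you re-derive it), then finish by invoking Lemma~\ref{lem:fixed1} and the capacity argument from the proof of Theorem~\ref{theo:fixed_partition}. One small remark: the sample-error bound here is actually uniform in $l$ (the key inequality $\|(T_{\bx_j}+\lam)^{1/2}g_{\lam,l}(T_{\bx_j})(T_{\bx_j}+\lam)^{1/2}\|\le 1$ holds for every $l$), so the constraint $l\le n/m$ plays no role in \emph{this} proposition---the $n$-threshold comes solely from Lemma~\ref{lem:fixed1}, while the subsampling constraint only enters in the companion approximation/computational error Proposition~\ref{prop:approx_localnys}.
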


\begin{proof}[Proof of Proposition \ref{prop:sample_localnys}]
Applying Proposition \ref{prop:sample_nys} we obtain
\begin{small}
\begin{align*}
\mbe\Big[ \big\|  \sum_{j=1}^m g_{\lam,l}(T_{\bx_j})( T_{\bx_j}\hat f_j -  S^*_{\bx_j}\by_j)  \big\|_{L^2(\nu)}^2  \Big] 
 &= \sum_{j=1}^m p_j\mbe\Big[ \big\|g_{\lam,l}( T_{\bx_j})( T_{\bx_j}\hat f_j -  S^*_{\bx_j}\by_j) \big\|_{L^2(\nu_j)}^2  \Big] \nonumber \\  
  &\leq  C 
     \sum_{j=1}^m p_j\;
           \cB_{\frac{n}{m}}^2( T_j ,\lam)\lam\;  \left( \frac{Mm}{n \lambda} + \sigma\sqrt{\frac{ {m{\cal N}( T_j, \lam)}}{{n \lam }}}\right)^2 \;.
\end{align*}
\end{small}
Arguing as in the proof of Theorem \ref{theo:fixed_partition}, using Lemma \ref{lem:fixed1}, implies the result.
\end{proof}


\begin{prop}[Approximation and Computational Error]
\label{prop:approx_localnys}
Let $\lam_n$ be defined by \eqref{def:lamchoice}.  Assume the number of subsampled points satisfies $l_n \geq n^{\beta}$ with 
\[\beta > \frac{\gamma+1}{2r+\gamma+1} \;. \]
Then 
\[\mbe\Big[ \big\|  \sum_{j=1}^{m} g_{\lam_n ,l_n}( T_{\bx_j})( T_{\bx_j} f_j - f_j  )  \big\|_{L^2(\nu)}^2  \Big] 
   \leq C R^2\; \paren{\frac{\sigma^2}{R^2n}}^{\frac{2(r+\frac{1}{2})}{2r+\gamma+1}} \;, \]
where $C$ does not depend on the model parameter $\sigma, M, R$.
\end{prop}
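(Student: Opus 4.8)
The plan is to run the argument of Proposition~\ref{prop:approx_nys} on each cell of the partition, after reducing the global $L^2(\nu)$-norm to a $p_j$-weighted sum over the partition via Lemma~\ref{lem:L2norm}.

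First, since $g_{\lam_n,l_n}(T_{\bx_j})(T_{\bx_j}f_j-f_j)\in\hat\cH_j$ by construction, after embedding into $L^2(\nu)$ it lies in the $j$-th summand of the decomposition in Lemma~\ref{lem:L2norm}, so that
\[
\mbe\Big[\big\|\textstyle\sum_{j=1}^m g_{\lam_n,l_n}(T_{\bx_j})(T_{\bx_j}f_j-f_j)\big\|_{L^2(\nu)}^2\Big]
=\sum_{j=1}^m p_j\,\mbe\Big[\big\|g_{\lam_n,l_n}(T_{\bx_j})(T_{\bx_j}f_j-f_j)\big\|_{L^2(\nu_j)}^2\Big].
\]
Using $\|\sqrt{T_j}h\|_{\hat\cH_j}=\|h\|_{L^2(\nu_j)}$, the $j$-th summand is exactly the quantity $T_2$ bounded in Proposition~\ref{prop:approx_nys}, now with $n$ replaced by $n_j=\lfloor n/m\rfloor$, $T$ by $T_j$, $f_\rho$ by $f_j$, and the source exponent $r$ by $r_j$.

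The second step is to verify the hypotheses of Proposition~\ref{prop:approx_nys} uniformly in $j$. Two facts are needed. First, $\cB_{\lfloor n/m\rfloor}(T_j,\lam_n)\le C$, which follows from Lemma~\ref{lem:fixed1} as soon as $n$ is large relative to $m$; this is the source of the lower bound on $n$ in terms of $m$, and it stays compatible with the admissible growth $m\lesssim n^{\alpha}$. Second, the local version of the computational-error estimate ${\cal C}_{1/2}(l_n,\lam_n)\lesssim\sqrt{\lam_n}$ of Lemma~\ref{lem:complamn}, which reduces to $\cB_{l_n}(T_j,\lam_n)\le C$. For the latter the key observation is the uniform bound $\cN(T_j,\lam_n)\le\cN(T_j,p_j\lam_n)\le\cN(T,\lam_n)\lesssim\lam_n^{-\gamma}$, where the first inequality is monotonicity of $\cN$ in its second argument (recall $p_j\le1$) and the second is Lemma~\ref{lem:eff_dim_sum}; feeding this into the estimate for $\cB_{l_n}(T_j,\lam_n)$ from the proof of Lemma~\ref{lem:complamn} shows it is $O(1)$ precisely under the assumed $l_n\ge n^{\beta}$ with $\beta>(\gamma+1)/(2r+1+\gamma)$, i.e.\ under the very same threshold as in the non-partitioned case, because $\lam_n$ is still the global choice \eqref{def:lamchoice}. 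Granted both, Proposition~\ref{prop:approx_nys} yields $\mbe\big[\|\sqrt{T_j}g_{\lam_n,l_n}(T_{\bx_j})(T_{\bx_j}f_j-f_j)\|_{\hat\cH_j}^2\big]^{1/2}\le C R\,\lam_n^{r_j+1/2}$ for every $j$.

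Finally, since $r_j\ge r$ and $\lam_n\le1$ we have $\lam_n^{2(r_j+1/2)}\le\lam_n^{2(r+1/2)}$, so summing over $j$ and using $\sum_j p_j=1$ gives the bound $C R^2\lam_n^{2(r+1/2)}$; substituting $\lam_n\simeq(\sigma^2/(R^2n))^{1/(2r+1+\gamma)}$ produces the displayed rate. I expect the only real difficulty to lie in the bookkeeping of the second step: confirming that a subsampling budget phrased in terms of the total size $n$ (rather than the local size $n/m$) still controls every local computational error uniformly — which works precisely because of the uniform effective-dimension bound $\cN(T_j,\lam_n)\lesssim\lam_n^{-\gamma}$ — and that the lower bound on $n$ forced by $\cB_{\lfloor n/m\rfloor}(T_j,\lam_n)\le C$ remains consistent with the growth condition on $m$ and with $l_n\le n/m$.
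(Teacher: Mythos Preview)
Your proposal is correct and follows essentially the same route as the paper: decompose via Lemma~\ref{lem:L2norm}, rerun the proof of Proposition~\ref{prop:approx_nys} on each cell with $(n,T,f_\rho,r)$ replaced by $(n/m,T_j,f_j,r_j)$, and verify its two internal ingredients uniformly in $j$, namely $\cB_{\lfloor n/m\rfloor}(T_j,\lam_n)\le C$ via Lemma~\ref{lem:fixed1} and $\cB_{l_n}(T_j,\lam_n)\le C$ via the uniform bound $\cN(T_j,\lam_n)\lesssim\lam_n^{-\gamma}$. The only cosmetic difference is that the paper derives $\cN(T_j,\lam_n)\le\cN(T,\lam_n/p_j)\lesssim p_j^{\gamma}\lam_n^{-\gamma}$ directly from Lemma~\ref{lem:eff_dim_sum} and Assumption~\ref{ass:one}.2, whereas you route through $\cN(T_j,p_j\lam_n)\le\cN(T,\lam_n)$; both give the same control.
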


\begin{proof}[Proof of Proposition \ref{prop:approx_localnys}]
For proving this Proposition we combine techniques from both the partitioning and subsampling approach. More precisely:
 
\begin{small}
\begin{align*}
\mbe\Big[ \big\|  \sum_{j=1}^m g_{\lam_n ,l_n}( T_{\bx_j})( T_{\bx_j} f_j - f_j )  \big\|_{L^2(\nu)}^2  \Big] 
 &= \sum_{j=1}^m p_j\mbe\Big[ \big\|g_{\lam_n ,l_l}( T_{\bx_j})( T_{\bx_j} f_j - f_j) \big\|_{L^2(\nu_j)}^2  \Big] \nonumber \\  
  &= \sum_{j=1}^m p_j\mbe\Big[ \big\| \sqrt{ T_j} g_{\lam_n ,l_n}( T_{\bx_j})( T_{\bx_j} f_j - f_j)  \big\|_{\hat \cH_j}^2  \Big]  \;.  
\end{align*}
\end{small}

We shall decompose as in \eqref{eq:dec_nervig}, with  $T$ replaced by $T_j$ and $T_{\bx}$ replaced by $T_{\bx_j}$,  
 \[   ||\sqrt{\bar T_j} g_{\lam_n ,l_n}(T_{\bx_j})(T_{\bx_j}\hat f_j - f_j)||_{\hat \cH_j} \leq C R\;(\;(a) + (b) + (c)\;) = (*)\;. \]
Following the lines of the proof of Proposition \ref{prop:approx_nys} leads to an upper bound (with probability at least $1-\eta$) for the rhs of the last inequality, which is 
\begin{align*}
(*) &\leq CR\log^2(8\etainv)\paren{ {\cal C}_{\frac{1}{2}}(l, \lam_n)^{2r+1} + \lam_n^{\frac{1}{2}+r} \cB^{\frac{1}{2}+r}_{\frac{n}{m}}(T_j, \lam_n) + \sqrt{\lam_n }\; {\cal C}_{\frac{1}{2}}(l, \lam_n)^{2r} } \\
 &\leq CR\log^2(8\etainv) \lam_n^{r+\frac{1}{2}}\paren{ \cB^{2r+1}_{l}( T_j, \lam_n) +\cB^{r+\frac{1}{2}}_{\frac{n}{m}}( T_j, \lam_n) +\cB^{2r}_{l}( T_j, \lam_n )  } \;.
\end{align*}

Thus, by integration and since $r \leq \frac{1}{2}$

\begin{small}
\begin{align*}
\mbe\Big[ \big\|  \sum_{j=1}^m g_{\lam_n ,l_n}( T_{\bx_j})( T_{\bx_j}f_j - f_j )  \big\|_{L^2(\nu)}^2  \Big] 
 &\leq CR^2\lam_n^{2(r+\frac{1}{2})} \sum_{j=1}^m p_j\paren{ \cB^{4}_{l}( T_j, \lam_n) +\cB^{2}_{\frac{n}{m}}( T_j, \lam_n) +\cB^{2}_{l}( T_j, \lam_n )  } \;.
\end{align*}
\end{small}

Note that by Lemma \ref{lem:fixed1}, if 
\begin{equation}\label{eq:final_alpha}
n\geq C_{\sigma, R, \gamma, r} m^{1+\frac{\gamma+1}{2r}}
\end{equation}
we have 
\begin{align*}
  \cB_{\frac{n}{m}}( T_j, \lam_n) &=  \left[1 + \left( \frac{2m}{n\lam_n} + \sqrt{\frac{m_n\cN( T_j ,\lam_n)}{n\lam}}\right)^2 \right] \\
&\leq  C\left[1 +   \paren{ \frac{2m}{n\lam_n} } + \paren{ \frac{m\cN( T_j ,\lam_n)}{n\lam}}    \right]  \\
&\leq C  \;.
\end{align*}  
Moreover, since $\cN(T_j, \lam_n) \leq \cN(T, \lam_n/p_j)$, by Assumption \ref{ass:one}, 2. and since $p_j\leq 1$
\[ \cB_{l_n}( T_j, \lam_n) \leq   1+\paren{  \frac{2}{l_n\lam_n} + \sigma\sqrt{  \frac{ \lam_n^{-\gamma}}{l_n \lam_n}  }   }^2  \;. \]
Straightforward calculation shows that 
\begin{equation*}
 \frac{2}{l_n\lam_n} = o(1)\;, \quad \mbox{if}\;\; l_n \geq n^{\beta'} \;,\; \beta' > \frac{1}{2r+\gamma+1}  
\end{equation*}
and
\begin{equation}
\label{eq:beta2}
  \sqrt{\frac{\lam_n^{-\gamma}}{l_n\lam_n}}= \cO(1)\;, 
\quad \mbox{if}\;\; l_n \geq n^{\beta'} \;,\; \beta' \geq \frac{\gamma+1}{2r+\gamma+1}\;. 
\end{equation}
Thus, \eqref{eq:beta2} ensures $ \cB_{l_n}( T_j, \lam_n)  = \cO(1)$. Finally, on each local set we have the requirement 
$l_n  \lesssim \frac{n}{m_n}$, which is implied by 
\[ l_n  \lesssim n^{1-\alpha} \sim n^{\frac{\gamma + 1}{2r+\gamma +1}} \; .\] 
Together with \eqref{eq:beta2} we get a sharp bound 
\[  l_n \sim n^{\frac{\gamma + 1}{2r+\gamma +1}} \;. \]
\end{proof}


\section{Probabilistic Inequalities}

In this section we recall some well-known probabilistic inequalities.

\begin{prop}[\cite{BlaMuc16}]
\label{Geta1_nips}  
For $n \in \mbn$, $\lambda \in (0,1]$ and  $\eta \in (0,1]$, one has with probability at least $1-\eta$\,:
\begin{equation*}
\big\| ( T +  \lam)^{-\frac{1}{2}}\;\left( T_{\bx}f_{\rho} -  S_{\bx}^*\by \right)\big\|_{\cH}\;  \leq \; 
2\log(2\eta^{-1})  \left( \frac{M}{n\sqrt{\lam}} + \sigma\sqrt{\frac{ {\cal N}( T, \lam)}{ n}} \right)\;.
\end{equation*}
\end{prop}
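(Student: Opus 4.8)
The plan is to recognise the left-hand side as the norm of an average of i.i.d.\ centered $\cH$-valued random variables and to apply a Bernstein inequality in Hilbert space. Assume without loss of generality that $\kappa=1$ (rescale the kernel), so that $T=\mbe[K_X\otimes K_X]$, $T_\bx=S_\bx^*S_\bx$, $S_\bx^*\by=\frac1n\sum_{i=1}^n Y_iK_{X_i}$ and $T_\bx\fo=\frac1n\sum_{i=1}^n f_\rho(X_i)K_{X_i}$. Then
\[ (T+\lam)^{-1/2}\paren{T_\bx\fo - S_\bx^*\by} = \frac1n\sum_{i=1}^n \xi_i\;, \qquad \xi_i := \paren{f_\rho(X_i)-Y_i}\,(T+\lam)^{-1/2}K_{X_i}\in\cH\;. \]
Because $f_\rho(x)=\mbe[Y\mid X=x]$, conditioning on $X_i$ gives $\mbe[\xi_i\mid X_i]=0$; since the sample is i.i.d., so are the $\xi_i$, and they are centered.

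The second step is to verify the Bernstein moment condition $\mbe\,\norm{\xi_i}^k_\cH\le\frac12 k!\,\tilde\sigma^2\tilde L^{k-2}$ for every integer $k\ge2$, with $\tilde L=M/\sqrt\lam$ and $\tilde\sigma^2=\sigma^2\,\cN(T,\lam)$. Two deterministic facts about the feature map are needed: the pointwise bound $\norm{(T+\lam)^{-1/2}K_x}_\cH\le\lam^{-1/2}$ (from $\norm{(T+\lam)^{-1/2}}\le\lam^{-1/2}$ and $\norm{K_x}_\cH^2=K(x,x)\le1$), and the identity $\mbe\,\norm{(T+\lam)^{-1/2}K_X}_\cH^2=\tr\brac{(T+\lam)^{-1}T}=\cN(T,\lam)$. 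Combining these with the noise assumption \eqref{def:bernstein} gives
\begin{align*}
\mbe\,\norm{\xi_i}^k_\cH
&= \mbe\brac{|Y-f_\rho(X)|^k\,\norm{(T+\lam)^{-1/2}K_X}_\cH^{k-2}\,\norm{(T+\lam)^{-1/2}K_X}_\cH^{2}}\\
&\le \lam^{-(k-2)/2}\,\mbe\brac{\mbe\brac{|Y-f_\rho(X)|^k\mid X}\,\norm{(T+\lam)^{-1/2}K_X}_\cH^2}\\
&\le \tfrac{1}{2} k!\,\sigma^2 M^{k-2}\,\lam^{-(k-2)/2}\,\cN(T,\lam)\;,
\end{align*}
which is exactly the asserted moment bound with the stated $\tilde L$ and $\tilde\sigma$.

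Finally, I would invoke the standard Bernstein inequality for Hilbert-space-valued random variables (of Pinelis--Sakhanenko type, see e.g.\ \cite{BlaMuc16}): for i.i.d.\ centered $(\xi_i)$ obeying the moment bound above, with probability at least $1-\eta$ one has $\norm{\frac1n\sum_{i=1}^n\xi_i}_\cH\le 2\log(2\etainv)\paren{\tilde L/n+\tilde\sigma/\sqrt n}$. Substituting $\tilde L=M/\sqrt\lam$ and $\tilde\sigma=\sigma\sqrt{\cN(T,\lam)}$ yields precisely $2\log(2\etainv)\paren{\frac{M}{n\sqrt\lam}+\sigma\sqrt{\cN(T,\lam)/n}}$.

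I expect the only genuinely delicate point to be bookkeeping the $\kappa$-normalisation of $T$, $T_\bx$, $S_\bx$ (and hence of the data term $S_\bx^*\by$) consistently, so that the constants emerge exactly as in the statement rather than with spurious powers of $\kappa$; with the paper's convention $T=\kappa^{-2}\mbe[K_X\otimes K_X]$ this is clean, and the remainder is a routine application of the concentration tool --- indeed the proposition is quoted from \cite{BlaMuc16}.
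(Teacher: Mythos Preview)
Your proof is correct and follows exactly the standard route: rewrite the quantity as an empirical mean of centered i.i.d.\ $\cH$-valued random variables, verify the Bernstein moment condition using the noise assumption \eqref{def:bernstein} together with the pointwise bound $\norm{(T+\lam)^{-1/2}K_x}_\cH\le\lam^{-1/2}$ and the identity $\mbe\norm{(T+\lam)^{-1/2}K_X}_\cH^2=\cN(T,\lam)$, and then apply the Pinelis--Sakhanenko Bernstein inequality in Hilbert space. Note that the paper itself does not supply a proof of this proposition; it is simply quoted from \cite{BlaMuc16}, and your argument is precisely the one given there, so there is nothing further to compare.
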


\begin{prop}[\cite{BlaMuc16}, Proposition 5.3]
\label{Geta2_nips}
For any $\lambda \in (0,1]$ and $\eta \in (0,1)$ one has with probability at least $1-\eta $\,:
\begin{equation*}
\norm{( T+ \lam)^{-1}( T- T_{\bx}) }_{HS} \; \leq 
2\log(2\eta^{-1}) \left( \frac{2}{n \lam} + \sqrt{\frac{\cN( T,\lam)}{n\lam }}  \right)\; .
\end{equation*}
\end{prop}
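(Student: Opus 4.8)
To establish Proposition~\ref{Geta2_nips} --- which is in fact quoted from \cite{BlaMuc16}, Proposition~5.3 --- the plan is to realize $(T+\lam)^{-1}(T-T_{\bx})$ as a normalized sum of i.i.d.\ centred random operators and to apply a Bernstein inequality in the Hilbert space $HS(\cH)$ of Hilbert--Schmidt operators on $\cH$.

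First I would set, for $i=1,\dots,n$, $\xi_i:=(T+\lam)^{-1}\paren{\kappa^{-2}K_{x_i}\otimes K_{x_i}-T}\in HS(\cH)$. Since $\mbe[\kappa^{-2}K_X\otimes K_X]=T$ and $T_{\bx}=\tfrac1n\sum_{i=1}^n\kappa^{-2}K_{x_i}\otimes K_{x_i}$, the $\xi_i$ are i.i.d.\ with $\mbe[\xi_i]=0$ and $(T+\lam)^{-1}(T-T_{\bx})=-\tfrac1n\sum_{i=1}^n\xi_i$. Next I would record the two quantities a Bernstein bound needs. For the almost-sure bound, $\norm[0]{\xi_i}_{HS}\le\norm[0]{(T+\lam)^{-1}}\paren{\kappa^{-2}\norm[0]{K_{x_i}}_\cH^2+\norm[0]{T}_{HS}}\le\lam^{-1}(1+1)=2\lam^{-1}=:L$, using Assumption~1 (which gives $\kappa^{-2}K(x,x)\le1$, hence $\kappa^{-2}\norm[0]{K_{x_i}}_\cH^2\le1$) and $\norm[0]{T}_{HS}\le\tr T=\kappa^{-2}\mbe[K(X,X)]\le1$. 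For the second moment, writing $\xi_i=Z_i-\mbe Z$ with $Z_i:=\kappa^{-2}(T+\lam)^{-1}(K_{x_i}\otimes K_{x_i})$, I would use $\mbe\norm[0]{Z_i-\mbe Z}^2\le\mbe\norm[0]{Z_i}^2$, the rank-one identity $\norm[0]{(T+\lam)^{-1}(u\otimes u)}_{HS}=\norm[0]{(T+\lam)^{-1}u}_\cH\,\norm[0]{u}_\cH$, the estimates $\norm[0]{(T+\lam)^{-1}K_X}_\cH^2\le\lam^{-1}\inner[0]{(T+\lam)^{-1}K_X,K_X}_\cH$ and $\norm[0]{K_X}_\cH^2\le\kappa^2$, together with the identity $\mbe\inner[0]{(T+\lam)^{-1}K_X,K_X}_\cH=\tr\paren{(T+\lam)^{-1}\mbe[K_X\otimes K_X]}=\kappa^2\,\cN(T,\lam)$, to conclude $\mbe\norm[0]{\xi_i}_{HS}^2\le\lam^{-1}\cN(T,\lam)=:\sigma^2$.

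Finally I would invoke the Bernstein inequality for $HS(\cH)$-valued random variables (the form used in \cite{BlaMuc16}): since $\norm[0]{\xi_i}_{HS}\le L$ almost surely entails $\mbe\norm[0]{\xi_i}_{HS}^m\le\tfrac12 m!\,\sigma^2 L^{m-2}$ for every integer $m\ge2$, with probability at least $1-\eta$ one has $\norm[0]{\tfrac1n\sum_{i=1}^n\xi_i}_{HS}\le\tfrac{2L}{n}\log(2\eta^{-1})+\sqrt{\tfrac{2\sigma^2}{n}\log(2\eta^{-1})}$. Substituting $L=2/\lam$ turns the first summand into $2\log(2\eta^{-1})\cdot\tfrac{2}{n\lam}$ exactly, and substituting $\sigma^2=\cN(T,\lam)/\lam$ together with $\sqrt{2\log(2\eta^{-1})}\le2\log(2\eta^{-1})$ (valid for $\eta\in(0,1)$) bounds the second summand by $2\log(2\eta^{-1})\sqrt{\cN(T,\lam)/(n\lam)}$, which is exactly the asserted inequality. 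I do not expect a genuine obstacle here: everything reduces to routine bookkeeping, and the only steps demanding a little care are the identity $\mbe[K_X\otimes K_X]=\kappa^2 T$ behind the variance computation and the reduction of the moment condition to the two-parameter $(L,\sigma)$ form of Bernstein's inequality.
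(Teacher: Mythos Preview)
The paper does not supply its own proof of this proposition; it simply quotes the result from \cite{BlaMuc16}, Proposition~5.3, in the section collecting auxiliary probabilistic inequalities. Your proposal reconstructs precisely the standard argument one finds in that reference: write $(T+\lam)^{-1}(T-T_{\bx})$ as an average of i.i.d.\ centred Hilbert--Schmidt-valued random variables, establish the almost-sure bound $L=2/\lam$ and the variance bound $\sigma^2=\cN(T,\lam)/\lam$, and invoke the Pinelis--Bernstein inequality in $HS(\cH)$. The bookkeeping you outline (the rank-one identity for $\norm[0]{(T+\lam)^{-1}(u\otimes u)}_{HS}$, the trace identity yielding $\cN(T,\lam)$, and the elementary estimate $\sqrt{2\log(2\eta^{-1})}\le 2\log(2\eta^{-1})$ for $\eta\in(0,1)$) is correct, and the constants match the stated bound exactly.
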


\begin{prop}[\cite{guo17}]
\label{prop:Guo_nips}
Define
\begin{equation}
\label{def:blam_nips}
  \cB_{ n}( T, \lam) :=  \left[1 + \left( \frac{2}{n\lam} + \sqrt{\frac{\cN( T ,\lam)}{n\lam}}\right)^2 \right] 
\end{equation}  
For any $\lam >0$,  $\eta \in (0,1]$, with probability at least $1-\eta$ one has 
\begin{equation}
\label{Guo:estimate_nips}
 \norm{ ( T_{\bx}+\lam)^{-1}( T+\lam) } \leq 
     8\log^2(2\etainv)   \cB_n( T, \lam) \;.
\end{equation}
\end{prop}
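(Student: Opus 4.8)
The plan is to follow the standard route for regularized‑covariance concentration: reduce the operator ratio $(T_{\bx}+\lam)^{-1}(T+\lam)$ to a single self‑adjoint ``difference operator,'' control that operator by the Bernstein‑type estimate already available in the excerpt (Proposition \ref{Geta2_nips}), and then repackage the resulting estimate into the advertised $\cB_n$‑form. The notation matches the Preliminaries, where $T=\kappa^{-2}S_{\bx}^*S_{\bx}$ is an empirical average of i.i.d.\ rank‑one terms.

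First I would introduce the self‑adjoint difference operator
\[
\Theta_{\bx} := (T+\lam)^{-1/2}(T-T_{\bx})(T+\lam)^{-1/2}
\]
and record the exact identity $(T+\lam)^{-1/2}(T_{\bx}+\lam)(T+\lam)^{-1/2}=I-\Theta_{\bx}$. Since $T_{\bx}\succeq 0$, the operator $T_{\bx}+\lam$ is invertible with norm at most $\lam^{-1}$, so $(T_{\bx}+\lam)^{-1}(T+\lam)$ is always bounded and is similar, by conjugation with $(T+\lam)^{1/2}$, to $(I-\Theta_{\bx})^{-1}$. This reduces the claim to an estimate for $\|(I-\Theta_{\bx})^{-1}\|$ (and the associated non‑self‑adjoint norm) in terms of $\|\Theta_{\bx}\|$.

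Second, I would bound $\|\Theta_{\bx}\|$. Because the operator norm of the self‑adjoint $\Theta_{\bx}$ equals its spectral radius, which is invariant under the similarity and bounded by the Hilbert–Schmidt norm, one has $\|\Theta_{\bx}\|\le\|(T+\lam)^{-1}(T-T_{\bx})\|_{\mathrm{HS}}$, and Proposition \ref{Geta2_nips} gives, with probability at least $1-\eta$,
\[
\|\Theta_{\bx}\| \;\le\; 2\log(2\etainv)\left(\frac{2}{n\lam}+\sqrt{\frac{\cN(T,\lam)}{n\lam}}\right).
\]
The point to emphasize is that the parenthesis is exactly $\sqrt{\cB_n(T,\lam)-1}$, so the Bernstein bound is already expressed in the variables of $\cB_n=1+(\cdot)^2$.

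Third — and this is where the real work lies — I would convert this high‑probability bound on $\|\Theta_{\bx}\|$ into the \emph{uniform} resolvent bound $\|(T_{\bx}+\lam)^{-1}(T+\lam)\|\le 8\log^2(2\etainv)\cB_n(T,\lam)$, splitting according to the size of $s:=\sqrt{\cB_n-1}$. In the concentration regime $2\log(2\etainv)s\le\tfrac12$ one has $I-\Theta_{\bx}\succeq\tfrac12 I$, so a Neumann‑series argument gives $\|(I-\Theta_{\bx})^{-1}\|\le 2$, which is dominated by $8\log^2(2\etainv)\cB_n\ge 8(\log 2)^2>2$. In the complementary regime the naive estimate $\|(I-\Theta_{\bx})^{-1}\|\le(1-\|\Theta_{\bx}\|)^{-1}$ blows up as $\|\Theta_{\bx}\|\uparrow 1$ and is \emph{not} by itself controlled by $8\log^2(2\etainv)\cB_n$; here I would instead invoke the deterministic bound $\|(T_{\bx}+\lam)^{-1}(T+\lam)\|\le(1+\lam)/\lam$ together with the trace‑class estimate $\cN(T,\lam)\le\lam^{-1}$ and argue that in this regime $\cB_n$ is itself large enough to absorb the deterministic bound. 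The hard part is making this case analysis close uniformly in $n$, $\lam\in(0,1]$ and $\eta\in(0,1]$: the whole force of the additive packaging $\cB_n=1+s^2$, the squared‑logarithm prefactor, and the a priori constraint $\cN(T,\lam)\le\lam^{-1}$ is precisely what is needed to guarantee that whenever the resolvent is genuinely large the right‑hand side is correspondingly large. Carrying out this delicate computation is exactly the content of the cited result \cite{guo17}, and I would either reproduce it or invoke it directly.
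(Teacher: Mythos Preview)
The paper does not supply a proof of this proposition: it is quoted from \cite{guo17} and used as a black box in the subsequent arguments (Propositions \ref{prop:approx_error_part}, \ref{prop:sample_error}, \ref{prop:sample_nys}, \ref{prop:approx_nys}). So there is no proof in the paper to compare your sketch against; both you and the author ultimately defer to \cite{guo17}.

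Two remarks on the sketch itself. First, the similarity of $(T_{\bx}+\lam)^{-1}(T+\lam)$ with $(I-\Theta_{\bx})^{-1}$ does not by itself control the \emph{norm} of the former by the latter, since similarity preserves the spectrum but not the operator norm. The cleaner reduction is to write $(T_{\bx}+\lam)^{-1}(T+\lam)=\bigl[I-(T+\lam)^{-1}(T-T_{\bx})\bigr]^{-1}$ directly and apply the Neumann estimate $\|(I-A)^{-1}\|\le(1-\|A\|)^{-1}$, which needs no self-adjointness; your $\Theta_{\bx}$ detour is then unnecessary. Second, your diagnosis of the ``complementary regime'' is correct in spirit, but the proposed fix via the crude deterministic bound $(1+\lam)/\lam$ together with $\cN(T,\lam)\le\lam^{-1}$ does not close uniformly in $(n,\lam,\eta)$: for example when $n\lam$ is of order one and $n$ is moderately large, the quantity $8\log^2(2\etainv)\cB_n(T,\lam)$ need not dominate $2/\lam$. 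The argument in \cite{guo17} handles this more carefully, so your final sentence---invoke the cited result---is the right move, and it is exactly what the present paper does.
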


\begin{lemma}
\label{lem:fixed1}
Let $m \in \mbn$ and  $\lam_n$ be defined by \eqref{def:lamchoice}.  Then for any $j\in [m]$ and $n > n_0$
\[ \cB_{\frac{n}{m}}( T_j, \lam_n ) \leq 2 \;.  \] 
Here, $n_0$ depends on the number $m$ of subsets and the model parameter $R, \sigma, \gamma, r$ and is explicitly given in \eqref{def:n0}.
\end{lemma}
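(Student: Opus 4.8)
The plan is to unpack the definition of $\cB_{n/m}(T_j,\lam_n)$ and show that the squared bracket term is at most $1$ once $n$ exceeds an explicit threshold $n_0$. Recall from \eqref{def:blam_nips} that
\[
\cB_{\frac{n}{m}}(T_j,\lam_n) = 1 + \left( \frac{2m}{n\lam_n} + \sqrt{\frac{m\,\cN(T_j,\lam_n)}{n\lam_n}} \right)^2 ,
\]
using $n_j = \lfloor n/m \rfloor$, so that $1/n_j \leq 2m/n$ for $n$ large (one absorbs the floor into a constant). Hence it suffices to make each of the two summands inside the square at most $1/4$, say; then the square is at most $1$ and the claim follows. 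So I would split into the two tasks: bound $\frac{2m}{n\lam_n}$ and bound $\frac{m\,\cN(T_j,\lam_n)}{n\lam_n}$.

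For the first term, insert $\lam_n \simeq (\sigma^2/(R^2 n))^{1/(2r+1+\gamma)}$ from \eqref{def:lamchoice}; then $\frac{m}{n\lam_n} \simeq m\, n^{-1}\, n^{1/(2r+1+\gamma)} (R^2/\sigma^2)^{1/(2r+1+\gamma)} = m\, n^{-(2r+\gamma)/(2r+1+\gamma)} (R^2/\sigma^2)^{1/(2r+1+\gamma)}$, which tends to $0$ as $n \to \infty$; requiring it to be $\leq 1/8$ gives a condition of the form $n \gtrsim m^{(2r+1+\gamma)/(2r+\gamma)}$ times a power of $R/\sigma$. For the second term, I would first control $\cN(T_j,\lam_n)$ in terms of the global effective dimension: since $T = \sum_j p_j^{-1} T_j$, one has $T \succeq p_j^{-1} T_j$, and a standard monotonicity argument gives $\cN(T_j,\lam) \leq \cN(T, \lam/p_j)$ (this is the same inequality already invoked in the proof of Proposition \ref{prop:approx_localnys}); then by the global capacity Assumption \ref{ass:one}, item 2, $\cN(T_j,\lam_n) \lesssim (\lam_n/p_j)^{-\gamma} \leq \lam_n^{-\gamma}$ since $p_j \leq 1$. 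Plugging in $\lam_n$ again, $\frac{m\,\cN(T_j,\lam_n)}{n\lam_n} \lesssim \frac{m\,\lam_n^{-(1+\gamma)}}{n} \simeq m\, n^{-1}\, n^{(1+\gamma)/(2r+1+\gamma)} (R^2/\sigma^2)^{(1+\gamma)/(2r+1+\gamma)} = m\, n^{-2r/(2r+1+\gamma)} (R^2/\sigma^2)^{(1+\gamma)/(2r+1+\gamma)}$, which again vanishes; demanding it be $\leq 1/8$ yields a condition $n \gtrsim m^{(2r+1+\gamma)/(2r)}$ times a power of $R/\sigma$, i.e. $n_0 \gtrsim C_{R,\sigma,\gamma,r}\, m^{1 + (\gamma+1)/(2r)}$, consistent with \eqref{def:n01} and \eqref{eq:final_alpha}. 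Collecting both requirements defines $n_0$ explicitly (this is presumably what is meant by \eqref{def:n0}), and then for $n > n_0$ the bracket in $\cB_{n/m}(T_j,\lam_n)$ is at most $(1/8 + 1/8)^2 \leq 1$, giving $\cB_{\frac{n}{m}}(T_j,\lam_n) \leq 2$.

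The only mildly delicate point — the "main obstacle" — is the uniformity over $j \in [m]$: the bound on $\cN(T_j,\lam_n)$ must not depend on $j$, which is exactly why the step $\cN(T_j,\lam_n) \leq \cN(T,\lam_n/p_j) \lesssim \lam_n^{-\gamma}$ (using $p_j \leq 1$) is essential rather than trying to use the local capacities directly. Everything else is bookkeeping: tracking the floor function $\lfloor n/m\rfloor$, keeping the dependence of $n_0$ on $(R,\sigma,m,\gamma,r)$ explicit, and choosing the numerical constants ($1/4$, $1/8$) so the two contributions sum to something whose square is $\leq 1$.
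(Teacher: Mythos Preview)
Your proposal is correct and follows essentially the same route as the paper: expand $\cB_{n/m}(T_j,\lam_n)$, bound the two summands separately using the inequality $\cN(T_j,\lam)\leq \cN(T,\lam/p_j)$ together with the global capacity assumption, and read off the threshold on $n$. The only cosmetic difference is that the paper retains the factor $p_j^{\gamma}$ in the bound $\cN(T_j,\lam_n)\leq C_\gamma p_j^{\gamma}\lam_n^{-\gamma}$ (so that the explicit $n_0$ in \eqref{def:n0} carries a $p_{\max}$), whereas you discard it via $p_j\leq 1$; both choices yield a valid $j$-uniform threshold of the same order $m^{(2r+1+\gamma)/(2r)}$.
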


\begin{proof}[Proof of Lemma \ref{lem:fixed1}]
Recall that we assume $\cN(T, \lam)\leq C_\gamma \lam^{-\gamma}$, for some $b\geq 1$, $C_\gamma<\infty$. Thus, by Lemma \ref{lem:eff_dim_sum}\; we have 
for any  $j \in [m]$
\[ \cN(T_j ,\lam) \; \leq \; \cN( T ,\lam/p_j) \; \leq  \; C_\gamma \; p_j^{\gamma} \; \lam^{-\gamma} \]
and thus 
\[  \frac{m\cN( T_j ,\lam_n)}{n\lam_n} \leq C_\gamma p_j\frac{m}{n} \lam_n^{-(1+\gamma)}  < \frac{1}{2} \;,\]
provided  
\[ n  > (2C_\gamma p_jm)^{\frac{2r+\gamma+1}{2r}}\;\paren{\frac{R}{\sigma}}^{\frac{2(\gamma+1)}{2r}} \;. \]
Moreover,
\[  \frac{2m}{n\lam_n} < \frac{1}{2}\;, \]
provided
\[  n>  (4m)^{\frac{2r+\gamma+1}{2r+1}}\;\paren{\frac{R}{\sigma}}^{\frac{2}{2r+\gamma}} \;. \]
Finally, setting $p_{max}=\max(p_1, ..., p_m)$, if 
\begin{equation}
\label{def:n0}
n> n_0 := (4m)^{\frac{2r+\gamma+1}{2r}} \;\max\paren{ \;\paren{R/\sigma}^{\frac{2}{2r+\gamma}}  \;, (p_{max}\;C_\gamma)^{\frac{2r+\gamma+1}{2r}} \paren{R/\sigma}^{\frac{2(\gamma+1)}{2r}} \;} 
\end{equation}
we have
\[ \cB_{\frac{n}{m}}(T_j, \lam_n) \leq 1+ \paren{ \frac{1}{2} + \frac{1}{2} }^2 = 2 \;,  \]
uniformly for any $j \in [m]$.
\end{proof}

\begin{lemma}
\label{lem:blam}
If $\lam_n$ is defined by \eqref{def:lamchoice} 
\[   \cB_{n}( T, \lam_n) \leq 2 \;, \]
provided $n$ is sufficiently large. 
\end{lemma}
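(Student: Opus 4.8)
The plan is to notice that this is precisely the $m=1$ specialization of Lemma \ref{lem:fixed1} (take $p_1=1$, so $\cX_1=\cX$, $\hat\cH_1=\cH$ and $T_1=T$); since the claim is short, however, I would give a self-contained argument rather than merely quoting that lemma. Recall that
\[ \cB_{n}(T,\lam) = 1 + \paren{\frac{2}{n\lam} + \sqrt{\frac{\cN(T,\lam)}{n\lam}}}^2 , \]
so it suffices to show that, for $n$ large enough, each of the two summands inside the square is at most $\tfrac12$; then $\cB_{n}(T,\lam_n) \leq 1 + \paren{\tfrac12+\tfrac12}^2 = 2$.

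First I would dispose of the $\min$ in \eqref{def:lamchoice}: for $n$ sufficiently large the second branch is active, so $\lam_n \simeq n^{-1/(2r+1+\gamma)} \leq 1$, and hence $n\lam_n \simeq n^{(2r+\gamma)/(2r+1+\gamma)} \to \infty$ because $2r+\gamma>0$. This already yields $\tfrac{2}{n\lam_n}\to 0$, so $\tfrac{2}{n\lam_n}<\tfrac12$ for all $n$ past an explicit threshold depending only on $R,\sigma,\gamma,r$, exactly as in the computation leading to \eqref{def:n0}.

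Next I would invoke the global capacity bound $\cN(T,\lam)\lesssim \lam^{-\gamma}$ to estimate
\[ \frac{\cN(T,\lam_n)}{n\lam_n} \;\lesssim\; \frac{\lam_n^{-(1+\gamma)}}{n} \;\simeq\; \frac{n^{(1+\gamma)/(2r+1+\gamma)}}{n} \;=\; n^{-2r/(2r+1+\gamma)} , \]
which tends to $0$ since $r>0$. Thus $\sqrt{\cN(T,\lam_n)/(n\lam_n)}<\tfrac12$ for $n$ beyond a second explicit threshold, again depending only on the model parameters. Combining the two bounds gives $\cB_{n}(T,\lam_n)\leq 2$ for $n$ sufficiently large.

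There is no genuine obstacle here — the argument is a short asymptotic calculation identical in spirit to the proof of Lemma \ref{lem:fixed1}. The only point requiring a little care is bookkeeping of the "$n$ sufficiently large" threshold so that its dependence on $R,\sigma,\gamma,r$ is consistent with the conventions fixed before Section \ref{sec:Partitioning_Approach}; this is handled by tracking constants exactly as in \eqref{def:n0}.
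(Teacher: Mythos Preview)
Your proposal is correct and matches the paper's approach: the paper's own proof simply says this is ``a straightforward calculation using Definition \eqref{def:lamchoice} and recalling that $\cN(T,\lam)\leq C_\gamma\lam^{-\gamma}$,'' and you have spelled out exactly that calculation (indeed, the $m=1$ instance of the argument for Lemma \ref{lem:fixed1}).
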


\begin{proof}[Proof of Lemma \ref{lem:blam}]
The proof is a straightforward calculation using Definition \eqref{def:lamchoice} and recalling 
that $\cN( T ,\lam) \leq C_\gamma \lam^{-\gamma}$\;.
\end{proof}




\section{Miscellanea}

\begin{prop}[Cordes Inequality,\cite{Bat97}, Theorem IX.2.1-2]
\label{prop:Cordes}
Let $A,B$ be two bounded, self-adjoint and positive operators on a Hilbert space. Then for any $s\in[0,1]$:
\begin{equation}
\norm{A^sB^s} \leq \norm{AB}^s\,.
\end{equation}
\end{prop}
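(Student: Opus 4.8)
The plan is to reduce the Cordes inequality to the L\"owner--Heinz inequality --- the operator monotonicity of $t\mapsto t^s$ on $[0,\infty)$ for $s\in[0,1]$ --- which is itself a classical fact (see \cite{Bat97}). There are three ingredients: a homogeneity normalization, a reduction to the invertible case by perturbation, and a short order computation. For the normalization, note that both $\|A^sB^s\|$ and $\|AB\|^s$ pick up the same factor $c^s$ under $A\mapsto cA$ for $c>0$; hence, after dividing $A$ by $\|AB\|$ (the case $AB=0$ is trivial when $B$ is invertible, and is otherwise absorbed into the limiting argument below), it suffices to show that $\|AB\|\le 1$ implies $\|A^sB^s\|\le 1$.

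Granting in addition that $B$ is invertible, the core estimate is immediate. Since $BA^2B=(AB)^*(AB)\le I$, conjugating by the self-adjoint operator $B^{-1}$ gives $A^2\le B^{-2}$; L\"owner--Heinz applied with exponent $s\in[0,1]$ to $0\le A^2\le B^{-2}$ yields $A^{2s}\le B^{-2s}$; conjugating by $B^{s}$ then gives $B^{s}A^{2s}B^{s}\le I$; and therefore
\[
\|A^sB^s\|^2=\|(A^sB^s)^*(A^sB^s)\|=\|B^{s}A^{2s}B^{s}\|\le 1 ,
\]
where we used $(A^sB^s)^*=B^sA^s$ and $X^sX^s=X^{2s}$ for $X\ge0$.

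To remove the invertibility hypothesis I would replace $B$ by $B+\epsilon I$ for $\epsilon>0$, apply the case just treated to the pair $(A,B+\epsilon I)$, and let $\epsilon\downarrow0$. Indeed, $\sup_{t\in[0,\|B\|]}\big|(t+\epsilon)^s-t^s\big|\le\epsilon^s$ (by concavity of $t\mapsto t^s$), so the functional calculus gives $(B+\epsilon I)^s\to B^s$ in operator norm, whence $A^s(B+\epsilon I)^s\to A^sB^s$; moreover $A(B+\epsilon I)\to AB$ in norm and $u\mapsto u^s$ is continuous on $[0,\infty)$, so the desired inequality follows in the limit.

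The key point is the chain $BA^2B\le I\ \Rightarrow\ A^2\le B^{-2}\ \Rightarrow\ A^{2s}\le B^{-2s}$, which recasts the problem into a form where L\"owner--Heinz applies directly. The one genuinely delicate step is the non-invertible case: there $B^{-2}$ is meaningless and $B^{z}$ need not be continuous up to the imaginary axis, so inverses cannot be taken directly, and it is the perturbation-and-limit device that makes the reduction rigorous; everything else is a single invocation of L\"owner--Heinz, which I would simply cite. (Proofs via complex interpolation or the Furuta inequality are also available, but the route above is the most economical.)
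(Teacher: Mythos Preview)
Your argument is correct. The paper, however, does not prove this proposition at all: it is stated with a citation to \cite{Bat97}, Theorem IX.2.1--2, and used as a black box throughout the appendix. So there is nothing to compare against on the paper's side.

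For what it is worth, the route you take---normalize by homogeneity, reduce to invertible $B$, pass from $BA^2B\le I$ to $A^2\le B^{-2}$ by congruence, apply L\"owner--Heinz to get $A^{2s}\le B^{-2s}$, and conjugate back---is exactly the standard proof one finds in Bhatia's book, so your write-up is consistent with the cited source. The $\epsilon$-perturbation to handle non-invertible $B$ is also handled cleanly; the bound $\|(B+\epsilon I)^s-B^s\|\le\epsilon^s$ via subadditivity of $t\mapsto t^s$ is the right way to justify the limit.
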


\begin{lemma}
\label{lem:integration_nips}
Let $X$ be a non-negative random variable with $\mbp[ X > C\log^u(k\etainv)] < \eta$ for any $\eta \in (0,1]$. Then
$\mbe[ X ] \leq \frac{C}{k}u\Gamma (u)$. 
\end{lemma}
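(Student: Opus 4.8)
The plan is to use the tail-sum (layer-cake) identity $\mbe[X] = \int_0^\infty \mbp[X>t]\,dt$, which is valid since $X\ge 0$, feed it the hypothesized deviation bound, and evaluate the resulting integral by the substitution that turns it into a Gamma integral.

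First I would rephrase the hypothesis as a bound on $\mbp[X>t]$ for each fixed $t>0$. Solving $C\log^u(k\eta^{-1}) = t$ for the confidence level gives $\eta = \tfrac{1}{k}\exp\!\big(-(t/C)^{1/u}\big)$; since this value lies in $(0,1]$ (using $k\ge 1$), the hypothesis applies and yields $\mbp[X>t] < \tfrac{1}{k}\exp\!\big(-(t/C)^{1/u}\big)$ for every $t\ge 0$. Plugging this into the tail-sum identity,
\[
\mbe[X] \;=\; \int_0^\infty \mbp[X>t]\,dt \;\le\; \frac{1}{k}\int_0^\infty \exp\!\big(-(t/C)^{1/u}\big)\,dt .
\]
Then I substitute $s=(t/C)^{1/u}$, i.e.\ $t = Cs^u$ and $dt = Cu\,s^{u-1}\,ds$, which transforms the right-hand side into
\[
\frac{Cu}{k}\int_0^\infty s^{u-1}e^{-s}\,ds \;=\; \frac{C}{k}\,u\,\Gamma(u),
\]
the claimed bound.

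I do not anticipate a real obstacle here: the proof is the tail-sum formula together with a single change of variables. The only points needing a little attention are the algebraic inversion of the threshold $C\log^u(k\eta^{-1})=t$ (and, should the resulting $\eta(t)$ fail to be $\le 1$ on a bounded initial range of $t$, the bookkeeping of the harmless additive constant coming from that range, absorbed into $C$ per the paper's convention that $C$ may change from line to line), and carrying the Jacobian $Cu\,s^{u-1}$ correctly through the substitution — that Jacobian is exactly what supplies the factor $u$ in $u\,\Gamma(u)$ and lands the integral on the definition $\Gamma(u)=\int_0^\infty s^{u-1}e^{-s}\,ds$.
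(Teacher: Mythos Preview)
Your approach is exactly the paper's one-line proof: apply the layer-cake identity $\mbe[X]=\int_0^\infty \mbp[X>t]\,dt$ and change variables. One small algebraic slip: inverting $t=C(\log(k\eta^{-1}))^u$ gives $\eta = k\,e^{-(t/C)^{1/u}}$, not $\tfrac{1}{k}e^{-(t/C)^{1/u}}$; consequently the integral evaluates to $Ck\,u\,\Gamma(u)$ rather than $\tfrac{C}{k}u\,\Gamma(u)$. (Note that $k\,e^{-(t/C)^{1/u}}\ge 1$ precisely on the range where $\eta\notin(0,1]$, so the tail bound $\mbp[X>t]\le k\,e^{-(t/C)^{1/u}}$ still holds trivially there and no extra bookkeeping is needed.) This discrepancy reflects a typo in the lemma as stated; in every application the constant is immediately absorbed into a generic $C$, so nothing downstream is affected.
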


\begin{proof}
Apply $\mbe[X] = \int_0^\infty \mbp[X > t] dt$. 
\end{proof}

\end{document}